\def\url@leostyle{%
  \@ifundefined{selectfont}{\def\UrlFont{\sf}}{\def\UrlFont{\small\ttfamily}}}
\theoremstyle{plain}
\newtheorem{thm}{Theorem}[section]
\newtheorem{lem}[thm]{Lemma}
\newtheorem{corollary}[thm]{Corollary}
\newtheorem{prop}[thm]{Proposition}
\theoremstyle{definition}
\newtheorem{df}[thm]{Definition}
\newtheorem{rmk}[thm]{Remark}
\newcommand{\roundb}[1]{\! \left(\! \left( #1 \right)\!\right)}
\newcommand{\curlyb}[1]{\! \left\{\! \left\{ #1 \right\}\!\right\}}
\newcommand{\OO}{\mathcal{O}}
\newcommand{\pp}{\mathfrak{p}}
\newcommand{\QQ}{\mathbb{Q}}
\newcommand{\ZZ}{\mathbb{Z}}
\newcommand{\car}{\mathrm{char}\:}
\newcommand{\Tr}{\mathrm{Tr}\:}
\newcommand{\into}{\hookrightarrow}
\newcommand{\arxiv}[1]{\href{http://arxiv.org/abs/#1}{{\tt \small arXiv:#1}}}
\title{Locally convex structures on higher local fields} 
\date{\today}
\author{Alberto C\'amara\footnote{The author is supported by a Doctoral Training Grant at the University of Nottingham.}}
\begin{document}

\maketitle

\begin{abstract}
  We establish how a higher local field can be described as a locally
  convex vector space once an embedding of a local field into it has been
  fixed. This extends previous results that had been obtained in the
  two-dimensional case. In particular, we study bounded
  and compactoid submodules of these fields and establish a self-duality
  result once a suitable topology on the dual space has been introduced.
\end{abstract}


\section*{Introduction}

In \cite{camara-fa2dlfs} we explained how characteristic zero two-dimensional local
fields may be regarded as locally convex vector spaces once an embedding
of a local field into them has been fixed.

This note, designed as a natural continuation of that work, explains how
the locally convex approach to higher topologies works for a higher local
field of arbitrary dimension.

It is perhaps necessary to explain the need to treat the arbitrary
dimensional case separately. The two-dimensional case often supplies the
first step of induction and therefore it is a good idea to treat it
first. At the same time, the cases for dimension
greater than two quickly turn into a rather involved exercise in notation
and the application of arguments which are familiar from the case $n=2$.
As such, the proof of many results in this note often refers to
\cite{camara-fa2dlfs} for the case $n=2$ and then indicates how to
proceed by induction.

Furthermore, there are many relevant functional analytic properties which
may be shown to hold in the two-dimensional case and which fail in
greater dimension or which one could only expect to hold in few
particular cases; being bornological, reflexive or nuclear is an example
of such properties.

However, it is possible to give explicit bases for the bornologies of
bounded and compactoid $\OO$-submodules of $F$. We also show an
explicit self-duality result in Theorem \ref{thm:selfduality}
which generalises \cite[Theorem 6.2]{camara-fa2dlfs}.

We not only often refer to concepts and results explained in
\cite{camara-fa2dlfs}, but also assume the reader to be
familiar with it. In particular, we reviewed the definitions and results of the theory of
locally convex vector spaces over a local field which are needed for this
work in
\cite[\textsection 1]{camara-fa2dlfs}. Besides that,
\cite{perez-garcia-schikof-locally-convex-spaces-nonarchimedean-valued-fields}
and \cite{schneider-non-archimedean-functional-analysis} contain suitable introductions to the topic.

We outline the contents of this work. \textsection
\ref{sec:categoryofhdlfs} is shaped very much after \cite[\textsection
2]{camara-fa2dlfs} and summarizes certain results from the
structure theory of higher local fields. At the end of this section we
focus our attention on a {\it standard} higher local field, which is one
of the form
\begin{equation*}
  F =
  K\curlyb{t_1}\cdots\curlyb{t_r}\roundb{t_{r+1}}\cdots\roundb{t_{n-1}}
\end{equation*}
and deduce our results first in this case.

Section \textsection \ref{sec:highertops} explains how the higher
topology on $F$ is locally convex. \textsection
\ref{sec:firstproperties} exposes facts which are either immediate 
consequences of the results in the previous section or facts which were
already known about higher topologies.

Sections \textsection \ref{sec:boundedsubmodules} and \textsection
\ref{sec:compactoidsubmodules} deal with the study of bounded
submodules and compactoid submodules of $F$,
respectively.

We study duality issues in \textsection \ref{sec:duality}. In Theorem
\ref{thm:selfduality} we prove 
self-duality after topologizing the dual space adequately, generalising
the work that was done for the two-dimensional case in \cite[Theorem
6.2]{camara-fa2dlfs}. We also describe polars and pseudo-polars 
of relevant submodules of $F$.

We explain how the results obtained in the previous sections may be
extended from $F$ to an arbitrary $n$-dimensional local field in
\textsection \ref{sec:generalcase} and we dedicate a few words to the positive
characteristic and archimedean cases in section \textsection
\ref{sec:positivecharcase}.

Finally, we discuss some interesting questions and directions of work specifically
related to this note in \textsection \ref{sec:future}.

\paragraph{Notation.} Whenever $F$ is a complete discrete valuation
field, we will denote by $\OO_F, \pp_F, \pi_F, \overline{F}$ its ring of
integers, the unique nonzero prime ideal in the ring of integers, an
element of valuation one and the residue field, respectively. 

Throughout the text, $K$ will denote a characteristic zero local
field, that is, a finite extension of $\QQ_p$ for some prime $p$. The cardinality of the finite field $\overline{K}$ will be denoted by $q$. The absolute value of $K$
will be denoted by $|\cdot|$, normalised so that $|\pi_K| = q^{-1}$. Due
to far too frequent apparitions in the text, we will ease notation by
letting $\OO := \OO_K$ and $\pp := \pp_K$.

The conventions $\pp^{-\infty} = K$, 
$\pp^\infty = 0$ and $q^{-\infty} = 0$ will be used.

The main object of study of this work is a field inclusion $K \subset F$
where $F$ is an $n$-dimensional local field. See \textsection
\ref{sec:categoryofhdlfs} for details.

\paragraph{Acknowledgements.} I thank Thomas Oliver for carefully reading
this note and pointing out several improvements. I also thank my
supervisor Ivan Fesenko for his guidance and encouragement. Finally, I am
also grateful to Cristina P\'erez Garc\'ia for several interesting
conversations regarding the theory of locally convex nonarchimedean
spaces.

\section{Higher local fields arising from arithmetic contexts}
\label{sec:categoryofhdlfs}

A {\it zero-dimensional local field} is a finite field. An {\it $n$-dimensional
local field}, for $n \geq 1$, is a complete discrete valuation field $F$
such that $\overline{F}$ is an $(n-1)$-dimensional local field. Thus, a
local field in the usual sense is a one-dimensional local field.

An $n$-dimensional local field $F$ determines then a collection of fields
$F_i$, $i \in \left\{ 0, \ldots, n \right\}$, by letting $F_n = F$,
$\overline{F_i} = F_{i-1}$ for $1 \leq i \leq n$; being $n$-dimensional
is then determined by the finiteness of $F_0$

An excellent introduction to this topic may be found in
\cite{morrow-intro-hlf}, we also often refer to results explained in
\cite{ihlf}.

In \cite[\textsection 2]{camara-fa2dlfs}, we explained how, as
was first introduced in \cite{morrow-explicit-approach-to-residues},
it is a good idea to regard two-dimensional local fields arising from an
arithmetic context as vector spaces over a local field.

The construction may be generalised to an arithmetic scheme of any
dimension as follows. Let $S$ be the spectrum of the ring of integers of a
number field and $f: X \to S$ be an arithmetic scheme of dimension $n$
(for our purposes, it is enough to suppose that $X$ is an $n$-dimensional
regular scheme and that $f$ is projective and
flat). Given a complete flag of irreducible subschemes $\eta_n \in
\overline{\left\{ \eta_{n-1} \right\}} \subset \cdots \subset
\overline{\left\{ \eta_0 \right\}} = X$, and assuming for simplicity that 
$\eta_n$ is regular in $\overline{\left\{ \eta_i \right\}}$ for each $0
\leq i \leq n-1$, define $A^n
= \widehat{\OO_{X,\eta_n}}$ and
\begin{equation*}
  A^i = \widehat{A_{\eta_i}^{i+1}}, \quad i \in \left\{ 0, \ldots, n-1 \right\}. 
\end{equation*}
It can be shown \cite[Remark 6.12]{morrow-intro-hlf} that $F = A^0$ is an
$n$-dimensional local field. The ring homomorphism $\OO_{S, f(x)} \to
\OO_{X,x}$ induces a field embedding $K \into F$, where $K =
\mathrm{Frac}\left( \widehat{\OO_{S,f(x)}} \right)$. Our conclusion is
that whenever $n$-dimensional local fields arise from an
arithmetic-geometric setting they always come equipped with a prefixed
embedding of a local field into them.

This justifies our decision to fix a characteristic zero local field $K$
and to study $n$-dimensional local fields not as fields $F$, but as pairs
of a field $F$ and a field embedding $K \into F$. We shall refer to such
a pair as an {\it $n$-dimensional local field over $K$}. A morphism of higher
local fields over $K$ is therefore a commutative diagram of field embeddings
\begin{equation*}
  \xymatrix{
  F_1 \ar@{->}[r] & F_2 \\
  K  \ar@{->}[u] \ar@{->}[ur] & 
  }
\end{equation*}
where $F_1$ and $F_2$ are higher local fields and $F_1 \to F_2$ is an extension of complete discrete valuation rings.

The structure of higher local fields is explained in
\cite{madunts-zhukov-topology-hlfs}, \cite[\textsection 1]{ihlf} or
\cite[Theorem 2.18]{morrow-intro-hlf}. They may be classified using
Cohen structure theory for complete rings.
In particular, we have the following possibilities: 
\begin{enumerate}
  \item If $\car F$ is positive, then it is possible to choose $t_1,
    \ldots, t_n \in F$ such that $F \cong
    F_0\roundb{t_1}\cdots\roundb{t_n}$. In this work we assume $\car F
    =0$, and only treat the positive characteristic case in \textsection
    \ref{sec:positivecharcase}. 
  \item If $\car F_1 = 0$, then there are $t_1, \ldots, t_{n-1} \in
    F$ such that 
    \begin{equation*}
      F \cong F_1\roundb{t_1}\cdots\roundb{t_{n-1}}.
    \end{equation*}
    Moreover, if an embedding of fields $K \into F$ has been fixed, then
    we have a finite extension $K \into F_1$.
  \item If none of the above holds, then there is a unique $r \in \left\{
    1, \ldots, n-1 \right\}$ such that $\car F_{r+1} \neq \car
    F_{r}$. Then there is a characteristic zero local field $L$ and
    elements $t_1, \ldots, t_{n-1} \in F$ such that $F$ is a finite
    extension of 
    \begin{equation*}
      L\curlyb{t_1}\cdots\curlyb{t_r}\roundb{t_{r+1}}\cdots\roundb{t_{n-1}}.
    \end{equation*}
    Moreover, if $\car F_0 = p$, $L$ may be chosen to be the unique 
    unramified extension of $\mathbb{Q}_p$ with residue field $F_0$. In
    this work we will not require this fact, but simply use the fact that
    given an embedding $K \into F$, there is a finite subextension
    \begin{equation*}
      K\curlyb{t_1}\cdots\curlyb{t_r}\roundb{t_{r+1}}\cdots\roundb{t_{n-1}}
      \into F.
    \end{equation*}
\end{enumerate}

\paragraph{Notation.} We fix from now on, and until the beginning of
\textsection \ref{sec:generalcase},
\begin{equation*}
  F = K\curlyb{t_1}\cdots\curlyb{t_r}\roundb{t_{r+1}}\cdots\roundb{t_{n-1}}
\end{equation*}
with $0 \leq r \leq n-1$. The extremal case $r = 0$ (resp. $r=n-1$)
stands for $F = K\roundb{t_1}\cdots\roundb{t_{n-1}}$ (resp. $F =
K\curlyb{t_1}\cdots\curlyb{t_{n-1}}$). We also let
\begin{equation*}
  L=K\curlyb{t_1}\cdots\curlyb{t_r}\roundb{t_{r+1}}\cdots\roundb{t_{n-2}},
\end{equation*}
by which we simply mean that $L$ is the subfield of $F$ consisting of
power series in $t_1, \ldots, t_{n-2}$.

It will be extremely convenient to use multi-index notation. For this
purpose, let $I = \ZZ^{n-1}$ and $J = \ZZ^{n-2}$. For $l\in \left\{
1,\ldots,n-1\right\}$, if we fix indices
$(i_l,\ldots,i_{n-1}) \in \ZZ^{n-l-1}$, we will denote 
\begin{equation*}
  I(i_l, \ldots, i_{n-1}) = \left\{ \alpha \in I;\; \alpha = \left( j_1,
  \ldots, j_{l-1}, i_l,
  \ldots, i_{n-1} \right) \text{ for some } (j_1, \ldots,j_{l-1}) \in
  \ZZ^{l-1} \right\}.
\end{equation*}

Any element $x \in F$ can be written uniquely as a power series
\begin{equation*}
  x = \sum_{i_{n-1} \gg -\infty} \cdots \sum_{i_{r+1} \gg -\infty }
  \sum_{i_r \in \ZZ} \cdots \sum_{i_1 \in \ZZ} x_{i_1, \ldots,
  i_{n-1}} t_1^{i_1} \cdots t_{n-1}^{i_{n-1}},
\end{equation*}
with $x_{i_1, \ldots, i_{n-1}} \in K$. We will abbreviate such an expression to
\begin{equation*}
 x = \sum_\alpha x(\alpha) t^\alpha,
\end{equation*}
for $\alpha \in I$ and $x(\alpha) \in K$. Finally, for $\alpha =
(i_1, \ldots, i_{n-1}) \in I$, denote $-\alpha = (-i_1,\ldots,
-i_{n-1})$.

Several proofs will use induction arguments. For such, it will be
convenient to denote elements of $L$ as $\sum_{\beta} x(\beta)
t^\beta$ for $\beta \in J$ and $x(\beta) \in K$, with this notation being
analogous to that adopted for elements in $F$. The statement
$\alpha=(\beta,i)$ for $\alpha \in I$, $\beta \in J$ and $i \in \ZZ$
means that if $\beta=(i_1, \ldots, i_{n-2})$, then $\alpha = (i_1,
\ldots, i_{n-2}, i)$.

When necessary, $I$ will be ordered with the inverse lexicographical
order, that is:
$(i_1, \ldots, i_{n-1}) < (j_1, \ldots, j_{n-1})$ if and only if for an
index $l \in \left\{ 1, \ldots, n-1 \right\}$ we have $i_l < j_l$ and
$i_m = j_m$ for $l < m \leq n-1$.

By a {\it net}, we will refer to a set indexed by $I$ or $J$ (more
generally, a net in a topological space is a subset indexed by a
directed set, but we will only use this more general notion in Proposition
\ref{prop:boundedsubmodsarecomplete}). We will
construct objects such as $\OO$-submodules of and seminorms on $F$
attached to a given net in $\ZZ \cup \left\{ \pm \infty \right\}$.
Instead of using notation $(n_\alpha)_{\alpha \in
I}$, which is standard for sequences and was used thoroughly in
\cite{camara-fa2dlfs}, we will denote the elements of a net by
$(n(\alpha))_{\alpha \in I}$, the 
net of coefficients $(x(\alpha))_{\alpha \in I} \subset K$ attached to an element
$x = \sum_\alpha x(\alpha) t^\alpha \in F$ being a first example. We will
ease notation by letting, when given a net $(n(\alpha))_{\alpha \in
I}\subset \ZZ \cup \left\{ \pm \infty \right\}$, $n(\beta,i) :=
n\left( \left( \beta,i \right) \right)$ for $\beta \in J$, $i \in \ZZ$.

\section{Local convexity of higher topologies}
\label{sec:highertops}

The construction of the higher topology on $F$ is explained for example in
\cite[\textsection 4]{morrow-intro-hlf} and \cite[\textsection
1]{madunts-zhukov-topology-hlfs}. It revolves around two basic
constructions.

First suppose that a $L$ is a field on which a translation
invariant and Hausdorff topology has been defined. Let $\left\{ U_i
\right\}_{i \in \ZZ}$ be a sequence of neighbourhoods of zero of $L$,
with the property that there is an index $i_0 \in \ZZ$ such that
$U_i = L$ for all $i \geq i_0$. The sets of the form
\begin{equation}
  \label{eqn:basicnhoodequalchar}
  \sum_{i \in \ZZ} U_i t^i := \left\{ \sum_{i \gg -\infty}
  x_i t^i;\; x_i \in U_i \text{ for all } i\right\}
\end{equation}
describe a basis of neighbourhoods of zero for a translation invariant
Hausdorff topology on $L\roundb{t}$ \cite[\textsection 1]{madunts-zhukov-topology-hlfs}.

Second, suppose that $L$ is a complete discrete valuation field with
$\car L \neq \car \overline{L}$, so that $L\curlyb{t}$ is a complete
discrete valuation field of mixed characteristic.
Suppose that a
translation invariant and Hausdorff topology has been defined on $L$.
Let $\left\{ V_i \right\}_{i \in \ZZ}$ be a sequence of neighbourhoods of 
zero of $L$ satisfying the following two conditions:
\begin{enumerate}
  \item There is $c \in \ZZ$ such that $\pp_L^c \subset V_i$ for every $i
    \in \ZZ$.
  \item For every $l \in \ZZ$ there is $i_0 \in \ZZ$ such that for every
    $i \geq i_0$ we have $\pp_L^l \subset V_i$. This condition simply
    means that as $i \to \infty$ the neighbourhoods of zero $V_i$ become
    bigger and bigger. We will denote this condition by $V_i \to L$ as
    $i \to \infty$.
\end{enumerate}

The sets of the form
\begin{equation}
  \label{eqn:basicnhoodmixedchar}
  \sum V_i t^i := \left\{ \sum_{i \in \ZZ} x_i t^i \in L\curlyb{t};\; x_i
  \in V_i \text{ for all } i\right\}
\end{equation}
constitute the basis of neighbourhoods of zero for a translation
invariant and Hausdorff topology on $L\curlyb{t}$
\cite[\textsection 1]{madunts-zhukov-topology-hlfs}.

The procedure for topologizing $F$ is an inductive application of the two
constructions specified above. Namely, we endow $K$ with its $\pp$-adic
topology, and for every $k \in \left\{ 1,
\ldots, r \right\}$, we apply the second construction inductively on
$E\curlyb{t_k}$, with $E = K\curlyb{t_1}\cdots\curlyb{t_{k-1}}$.
For $k \in \left\{ r+1, \ldots, n-1 \right\}$, we apply inductively the
first construction on $E\roundb{t_k}$, with $E =
K\curlyb{t_1}\cdots\curlyb{t_r}\roundb{t_{r+1}}\cdots\roundb{t_{k-1}}$.

The resulting topology on $F$ is called the {\it higher topology}.

If $r < n$ the higher topology on $F$
depends on the choice of a coefficient field, that is, a field
inclusion $\overline{F} \subset F$ \cite[\textsection
1.4]{madunts-zhukov-topology-hlfs}. This is due to the fact that, since in this case $\car
\overline{F} = 0$ and $\overline{F}$ is transcendental over
$\mathbb{Q}$, there are infinitely many choices for such an embedding
\cite[II.5]{fesenko-vostokov-local-fields}.
In our description of the higher topology, we are implicitly choosing
an isomorphism $\overline{F} \cong K\curlyb{t_1}\cdots\curlyb{t_r}
\roundb{t_{r+1}}\cdots\roundb{t_{n-2}}$. In the two-dimensional equal
characteristic case, there is a unique coefficient field which factors
the field embedding $K \into F$, namely the algebraic closure of $K$ in
$F$ \cite[\textsection 7.1]{camara-fa2dlfs}. Such a choice is not possible whenever $n \geq 3$.

\begin{prop}
  \label{prop:highertopislocconv}
  The higher topology on $F$ is locally convex. It may be described as
  follows. For any net $( n(\alpha) )_{\alpha \in I} \subset
  \ZZ \cup \left\{ -\infty \right\}$ subjected to the conditions:
  \begin{enumerate}
    \item For any $l \in \left\{ r+1, \ldots, n-1 \right\}$ and fixed
      indices $i_{l+1}, \ldots, i_{n-1} \in \ZZ$, there is a $k_0 \in \ZZ$ such
      that for every $k \geq k_0$ we have
      \begin{equation*}
	n(\alpha) = -\infty \quad \text{for all } \alpha \in I(k,
	i_{l+1}, \ldots, i_{n-1} ).
      \end{equation*}
    \item For any $l \in \left\{ 1, \ldots, r \right\}$ and fixed indices
      $i_{l+1}, \ldots, i_{n-1}$, there is an integer $c \in \ZZ$ such
      that
      \begin{equation*}
	n(\alpha) \leq c\quad \text{for every } \alpha \in I(i_{l+1}, \ldots, i_{n-1}),
      \end{equation*}
      and we have that
      \begin{equation*}
	n(\alpha) \to -\infty, \quad \alpha \in I(k, i_{l+1}, \ldots,
	i_{n-1}), \text{ as } k \to \infty.
      \end{equation*}
  \end{enumerate}
  Then, the open lattices of $F$ are those of the form
  \begin{equation}
    \label{eqn:openlattices}
    \Lambda = \sum_{\alpha} \pp^{n(\alpha)} t^\alpha.
  \end{equation}
\end{prop}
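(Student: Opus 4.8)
The plan is to argue by induction on $n$, peeling off the outermost variable $t_{n-1}$ and invoking the statement for the $(n-1)$-dimensional local field $L$ that sits inside $F$ as the subfield of power series in $t_1,\ldots,t_{n-2}$. I shall use throughout the criterion recalled in \cite[\textsection 1]{camara-fa2dlfs} that a topology on a $K$-vector space is locally convex exactly when $0$ admits a fundamental system of neighbourhoods consisting of open lattices; hence it suffices to show that the sets $\Lambda(n):=\sum_\alpha \pp^{n(\alpha)}t^\alpha$, with $n=(n(\alpha))_{\alpha\in I}$ ranging over the nets satisfying (i)--(ii), are open lattices and form such a fundamental system. That $\Lambda(n)$ is a well-defined $\OO$-submodule of $F$ is immediate; the role of conditions (i)--(ii) is precisely to make $\Lambda(n)$ a neighbourhood of $0$. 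The base case is $n=1$, where $F=K$ carries its $\pp$-adic topology and the admissible nets reduce to a single value $n(\ast)\in\ZZ\cup\{-\infty\}$ with no constraint; equivalently, one may start from $n=2$, which is the two-dimensional case of \cite{camara-fa2dlfs}.

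For the inductive step write $\alpha=(\beta,i)$ with $\beta\in J$, $i\in\ZZ$, and distinguish the shape of the last variable. Suppose first $r\leq n-2$, so that $F=L\roundb{t_{n-1}}$ and the higher topology is obtained from that of $L$ by the construction \eqref{eqn:basicnhoodequalchar}. Using local convexity of $L$, every basic neighbourhood $\sum_i U_i t_{n-1}^i$ contains one in which each $U_i$ is an open lattice of $L$ (still with $U_i=L$ for $i\geq i_0$), which by the inductive hypothesis may be taken of the form $\Lambda(m_i)$ for an admissible net $m_i$ on $J$; setting $n(\beta,i):=m_i(\beta)$ identifies $\sum_i\Lambda(m_i)t_{n-1}^i$ with $\Lambda(n)$. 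Unwinding the multi-index conventions, $n$ satisfies (i)--(ii) on $I$ if and only if every $m_i$ satisfies (i)--(ii) on $J$ and $m_i\equiv-\infty$ for $i\gg 0$, the latter being exactly condition (i) for $l=n-1$; conversely every such $\Lambda(n)$ is a basic neighbourhood of this kind, hence an open $\OO$-submodule. This disposes of the equal-characteristic case, and it is essentially bookkeeping with indices.

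Suppose now $r=n-1$, so that $F=L\curlyb{t_{n-1}}$ with $L$ of mixed characteristic and the topology is built by \eqref{eqn:basicnhoodmixedchar}. The argument has the same shape, but the refinement of a basic neighbourhood $\sum_i V_i t_{n-1}^i$ to one with the $V_i$ open lattices must now be carried out while preserving the two defining conditions $\pp_L^c\subseteq V_i$ (uniformly in $i$) and $V_i\to L$ --- and this is the crux. Everything reduces to the following sublemma, which I regard as the main obstacle: \emph{every neighbourhood $V$ of $0$ in $L$ with $\pp_L^c\subseteq V$ contains an open lattice $V'$ with $\pp_L^c\subseteq V'$.} Granting it, let $c_i$ be the least integer with $\pp_L^{c_i}\subseteq V_i$ --- this exists and is finite because any open lattice $\Lambda(m)$ of $L$ contains $\pp_L^{\sup_\beta m(\beta)}$, the supremum being finite by condition (ii) for the top index of $L$ --- and apply the sublemma to obtain, by the inductive hypothesis, admissible nets $m_i$ on $J$ with $\pp_L^{c_i}\subseteq\Lambda(m_i)\subseteq V_i$ and $m_i(\beta)\leq c_i$ for all $\beta$ (replace $m_i$ by $\min(m_i,c_i)$ if necessary). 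Conditions (1)--(2) for the $V_i$ say exactly $\sup_i c_i<\infty$ and $c_i\to-\infty$ as $i\to\infty$; hence $\sup_{i,\beta}m_i(\beta)<\infty$ and $\sup_\beta m_i(\beta)\to-\infty$ as $i\to\infty$, which is precisely condition (ii) for $l=n-1$ for the net $n(\beta,i):=m_i(\beta)$, the conditions for $l<n-1$ being those for the individual $m_i$. Again every admissible $\Lambda(n)$ is visibly a basic neighbourhood that is an $\OO$-submodule, hence open.

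It remains to prove the sublemma, where I would again induct, the two-dimensional instance being contained in \cite{camara-fa2dlfs}. The key input is that a quotient of a locally convex space by a subspace is locally convex: the image of $V$ in $L/\pp_L^c$ is a neighbourhood of $0$, hence contains an open lattice, whose preimage in $L$ is the sought $V'$; a little care is needed because $V$ need not be $\pp_L^c$-saturated, but this is handled as in the two-dimensional case. Apart from the sublemma, I expect the only point requiring attention to be the translation of (i)--(ii) across the multi-index conventions in the two inductive steps; the rest follows the template of \cite{camara-fa2dlfs}.
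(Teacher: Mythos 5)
Your base case, your equal-characteristic step and the index bookkeeping are fine, but the sublemma on which you hang the whole mixed-characteristic step is false as soon as $\dim L\geq 2$, i.e.\ precisely in the new cases $n\geq 3$. Take $L=K\curlyb{t_1}$ and $c=0$, and let $\Lambda=\sum_j\pp^{m(j)}t_1^j$ be any open lattice of $L$ with $m(0)=1$, so that $\OO_L\not\subseteq\Lambda$. Then $V=\OO_L\cup\Lambda$ is a neighbourhood of zero containing $\pp_L^0=\OO_L$, but it contains no open lattice $V'$ with $\OO_L\subseteq V'$: a subgroup contained in a union of two subgroups lies in one of them, so $V'\subseteq\OO_L$ or $V'\subseteq\Lambda$; the first is impossible because $\OO_L$ is not open in the higher topology of $K\curlyb{t_1}$ (every open lattice has exponents tending to $-\infty$ in the $t_1$-direction), and the second forces $\OO_L\subseteq\Lambda$, a contradiction. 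The sublemma holds in the base case $L=K$ only for the trivial reason that $\pp_K^c$ is itself open; in higher dimensions $\pp_L^c$ is not open, and your proposed proof also fails on its own terms, since the preimage in $L$ of a lattice contained in the image of $V$ in $L/\pp_L^c$ is contained in $V$ only when $V+\pp_L^c=V$, which cannot be arranged because the statement itself is false. Worse, the refinement you are aiming at is genuinely impossible: feeding $V_i=\OO_L\cup\Lambda^{(i)}$ for $i<0$ (with $\Lambda^{(i)}$ an open lattice whose exponent at $\beta=0$ equals $|i|$) and $V_i=L$ for $i\geq 0$ into the construction (\ref{eqn:basicnhoodmixedchar}) gives a set $\sum_i V_i t_{n-1}^i$ satisfying conditions 1--2 with $c=0$ which contains no lattice of the form (\ref{eqn:openlattices}), because each slice $M_i$ of such a lattice would have to lie in $\Lambda^{(i)}$ and hence could not contain a fixed $\pp_L^{c'}$ uniformly in $i$.

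The way out, and what the paper actually does, is to note that the inductive Madunts--Zhukov construction is to be applied not to arbitrary sequences of neighbourhoods of zero of $L$ but to sequences taken from the basis of neighbourhoods produced at the previous step, which by the inductive hypothesis consists exactly of the lattices $\sum_\beta\pp^{m(\beta)}t^\beta$. With that reading there is nothing to refine: one plugs these lattices $M_i$ directly into the two constructions, observes that $\sum_i M_i t_{n-1}^i$ is again an $\OO$-lattice, and translates the conditions ``$M_i=L$ for $i\geq i_0$'', respectively ``$\pp_L^c\subseteq M_i$ for all $i$ and $M_i\to L$'', into conditions (i)--(ii) on the net $n(\beta,i)$ --- exactly the bookkeeping you already carry out. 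So your argument can be repaired by deleting the sublemma and the refinement step and replacing them with this observation about the input of the construction; as written, the mixed-characteristic step rests on a false statement.
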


\begin{rmk}
  Let us clarify what the second part in condition (ii) above stands
  for. The condition is that for any $l \in \left\{ 1,\ldots,r \right\}$
  and fixed indices $i_{l+1}, \ldots, i_{n-1}$, given $d \in \ZZ$ there is an
  integer $k_0$ such that for every $k \geq k_0$ and $\alpha \in
  I(k,i_{l+1}, \ldots, i_{n-1})$ we have $n(\alpha) \leq d$.
\end{rmk}
 
\begin{proof}
  We will prove the result by induction on $n$. For $n=2$, see
  \cite[Propositions 3.1 and 3.7]{camara-fa2dlfs}. Suppose $n > 2$. Then write $L =
  K\curlyb{t_1}\cdots\curlyb{t_r}\roundb{t_{r+1}}\cdots\roundb{t_{n-2}}$,
  with $r \in \left\{ 0, n-2 \right\}$. 
  By induction hypothesis, the
  higher topology on $L$ is locally convex and its open lattices are of the form
  \begin{equation}
    \label{eqn:latticesinductionhyp}
    M = \sum_{\beta \in J} \pp^{n(\beta)} t^\beta,
  \end{equation}
  with $(n(\beta))_{\beta \in J} \subset \ZZ \cup \left\{ -\infty
  \right\}$ a net satisfying the conditions in the
  statement of the proposition.

  Now we need to distinguish two cases. First, if $r \leq n-2$, we must
  apply the construction in which neighbourhoods of zero are of the form
  (\ref{eqn:basicnhoodequalchar}), as $F = L\roundb{t_{n-1}}$. So we let
  \begin{equation*}
    M_{i} = \sum_{\beta \in J} \pp^{n(\beta, i)} t^\beta,\quad i \in
    \ZZ,
  \end{equation*}
  with the property that there is an $i_0 \in \ZZ$ such that for all
  $i \geq i_0$, $M_i = L$. This last condition is equivalent to setting
  $n(\beta, i) = -\infty$ for all $\beta \in J$ and $i \geq i_0$. As the
  $M_i$ describe a basis of neighbourhoods of zero for the higher
  topology on $L$, the higher topology on $F$ admits a basis of
  neighbourhoods of zero formed by sets of the form
  \begin{equation*}
    \Lambda = \sum_{i \in \ZZ} M_i t_{n-1}^i.
  \end{equation*}
  By induction hypothesis, the $M_i$ are all $\OO$-lattices, which is
  enough to show that $\Lambda$ is an $\OO$-lattice. So, in this case,
  we let $\alpha = \left( \beta,i \right)$ 
  so that a basis of neighbourhoods of zero for the higher topology is described by sets
  $\Lambda = \sum_{\alpha \in I} \pp^{n(\alpha)} t^\alpha$. On top of the
  conditions which the indices $n(\beta,i)$ satisfy by the induction 
  hypothesis for $\beta \in J$, we must add the further condition that
  there is an integer $i_0$ such that for all $i \geq i_0$, $n(\alpha) =
  -\infty$ for all $\alpha \in I(i)$. This shows that our claim holds in
  this case.

  The second case is the one in which $r > n-2$ and we must apply the
  construction in which neighbourhoods of zero are given by sets of the
  form (\ref{eqn:basicnhoodmixedchar}), as $F = L\curlyb{t}$. So we set
  \begin{equation*}
    M_{i} = \sum_{\beta \in J} \pp^{n(\beta, i)} t^\beta,\quad i \in \ZZ,
  \end{equation*}
  subject to the properties:
  \begin{enumerate}
    \item There is an integer $c$ such that for every $i \in \ZZ$,
      $\pp_L^c \subset M_i$. By induction hypothesis, this means that
      $n(\beta,i) \leq c$ for every $\beta \in J$ and $i \in \ZZ$.
    \item $M_i \to L$ as $i \to \infty$. This is equivalent to
      $n(\beta, i) \to -\infty$ for $\beta \in J$ as $i \to \infty$.
  \end{enumerate}
  As $M_i$ describe a basis of neighbourhoods of zero of the topology of
  $L$, the sets of the form
  \begin{equation*}
    \Lambda = \sum_{i \in \ZZ} M_i t_{n-1}^i
  \end{equation*}
  describe a basis of neighbourhoods of zero for the higher topology on
  $F$. Since the $M_i$ are $\OO$-lattices, we get that $\Lambda$ is an
  $\OO$-lattice. Again, we let $\alpha = (\beta,i)$,
  so that the $\OO$-lattice $\Lambda$ may be described as $\Lambda =
  \sum_{\alpha \in I} \pp^{n(\alpha)} t^\alpha$. On top of the conditions
  satisfied by the $n(\alpha)$ which are inherited by induction, there are the
  two new conditions:
  \begin{enumerate}
    \item There is an integer $c$ such that $n(\alpha) \geq c$ for all
      $\alpha \in I$.
    \item $n(\alpha) \to -\infty$ for $\alpha \in I(i)$, as $i \to
      \infty$.
  \end{enumerate}
  This shows that the result also holds in this case.
\end{proof}

After showing that the higher topology on $K \into F$ is locally convex, it is
natural to describe it in terms of seminorms.

\begin{prop}
  \label{prop:seminorms}
  The higher topology on $F$ is the locally convex $K$-vector space
  topology defined by the seminorms of the form
  \begin{equation}
    \label{eqn:admissibleseminorm}
    \| \cdot \|: F \to \mathbb{R}, \quad \sum_\alpha x(\alpha) t^\alpha
    \mapsto \sup_\alpha |x(\alpha)| q^{n(\alpha)} 
  \end{equation}
  as $( n(\alpha) )_{\alpha \in I} \subset \ZZ \cup \left\{ -\infty
  \right\}$ varies over the nets
  described in the statement of Proposition
  \ref{prop:highertopislocconv}.
\end{prop}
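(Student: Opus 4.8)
The plan is to deduce this from Proposition~\ref{prop:highertopislocconv} together with the standard dictionary between open $\OO$-lattices and seminorms in a locally convex $K$-vector space, as reviewed in \cite[\textsection 1]{camara-fa2dlfs} (see also \cite{schneider-non-archimedean-functional-analysis}, \cite{perez-garcia-schikof-locally-convex-spaces-nonarchimedean-valued-fields}): if a family of open $\OO$-lattices $\left\{ \Lambda_j \right\}$ is a base of neighbourhoods of zero for a locally convex topology, then that topology is also the one defined by the family of gauge seminorms $p_{\Lambda_j}(x) = \inf\left\{ |a| \,;\, a \in K^\times,\ x \in a \Lambda_j \right\}$. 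By Proposition~\ref{prop:highertopislocconv} the open lattices of $F$ are exactly the sets $\Lambda = \sum_\alpha \pp^{n(\alpha)} t^\alpha$, as $(n(\alpha))_{\alpha \in I}$ ranges over the nets described there; so it suffices to show that the gauge seminorm $p_\Lambda$ of such a $\Lambda$ equals the seminorm $\| \cdot \|$ of (\ref{eqn:admissibleseminorm}) attached to the same net.

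Fix one of those nets $(n(\alpha))_{\alpha \in I}$ and let $x = \sum_\alpha x(\alpha) t^\alpha \in F$. First I would check that $\| x \| := \sup_\alpha |x(\alpha)| q^{n(\alpha)}$ is finite. Since $\Lambda$ is an $\OO$-lattice it is absorbing, so $x \in a \Lambda$ for some $a \in K^\times$; unwinding, this says precisely that $|x(\alpha)| q^{n(\alpha)} \leq |a|$ for every $\alpha$, whence $\| x \| \leq |a| < \infty$. Granting finiteness, $\| \cdot \|$ is readily seen to be a non-archimedean seminorm with $\| a x \| = |a|\, \| x \|$: one argues coefficient by coefficient, using $(x+y)(\alpha) = x(\alpha) + y(\alpha)$, $(ax)(\alpha) = a\,x(\alpha)$ and the ultrametric inequality on $K$.

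Next I would identify the closed unit ball of $\| \cdot \|$. Because $|\pi_K| = q^{-1}$, for $n(\alpha) \in \ZZ$ we have $x(\alpha) \in \pp^{n(\alpha)}$ if and only if $|x(\alpha)| q^{n(\alpha)} \leq 1$, while for $n(\alpha) = -\infty$ both conditions are vacuous (recall $\pp^{-\infty} = K$, $q^{-\infty} = 0$); taking the supremum over $\alpha$ yields $x \in \Lambda \iff \| x \| \leq 1$. Hence $\Lambda = \left\{ x \in F \,;\, \| x \| \leq 1 \right\}$ and, by homogeneity, $a \Lambda = \left\{ x \in F \,;\, \| x \| \leq |a| \right\}$ for every $a \in K^\times$. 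Finally, each $|x(\alpha)| q^{n(\alpha)}$ lies in $q^{\ZZ} \cup \left\{ 0 \right\} = |K|$, and, the set of these values being bounded above, its supremum is attained and also lies in $|K|$. Writing $\| x \| = q^m$ (or $\| x \| = 0$) we therefore get
\[
p_\Lambda(x) = \inf\left\{ |a| \,;\, a \in K^\times,\ \| x \| \leq |a| \right\} = \| x \|,
\]
which identifies $p_\Lambda$ with $\| \cdot \|$ and completes the proof.

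Everything here is a formal manipulation of the lattice--seminorm correspondence except for the finiteness of the supremum $\| x \|$, which I regard as the only substantive point; it is immediate once one knows $\Lambda$ is an $\OO$-lattice, but if one preferred to verify it directly from conditions (i)--(ii) on the net and the support conditions satisfied by an element of $F$, the natural argument would be an induction on $n$ peeling off the variable $t_{n-1}$, parallel to the proof of Proposition~\ref{prop:highertopislocconv}, with the case $n=2$ taken from \cite{camara-fa2dlfs}.
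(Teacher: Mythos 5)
Your proposal is correct and follows essentially the same route as the paper: both identify the seminorm (\ref{eqn:admissibleseminorm}) as the gauge seminorm of the open lattice $\Lambda = \sum_\alpha \pp^{n(\alpha)} t^\alpha$ from Proposition \ref{prop:highertopislocconv}, by observing that $x \in a\Lambda$ if and only if $|x(\alpha)|q^{n(\alpha)} \leq |a|$ for all $\alpha$ and then passing to the infimum over $a$. Your extra care about finiteness of the supremum and its attainment in $|K| = q^{\ZZ} \cup \{0\}$ only makes explicit points the paper leaves implicit.
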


\begin{proof}
  It is necessary to show that the gauge seminorm associated to the open
  lattice $\Lambda$ as in (\ref{eqn:openlattices}) is the one given by
  (\ref{eqn:admissibleseminorm}).

  The gauge seminorm defined by $\Lambda$ is by definition
  \begin{equation*}
    \|x\| = \inf_{x \in a\Lambda} |a|,\quad \text{for } x \in F.
  \end{equation*}
  Let $x = \sum_{\alpha} x(\alpha) t^\alpha$. We have that $x \in a\Lambda$
  if and only if $x(\alpha) \in
  a\pp^{n(\alpha)}$ for every $\alpha \in I$. That is, if and only if
  \begin{equation}
    \label{eqn:pfadmissseminorm}
    |x(\alpha)| q^{n(\alpha)} \leq |a| \quad \text{for all } \alpha \in I.
  \end{equation}
  The infimum of the values $|a|$ for which
  (\ref{eqn:pfadmissseminorm}) holds is the supremum of the
  values $|x(\alpha)|q^{n(\alpha)}$ as $\alpha \in I$.
\end{proof}

\begin{df}
  The seminorms on $F$ defined in the previous proposition will be
  referred to as {\it admissible seminorms}. 
\end{df}
  
  An admissible seminorm
  $\| \cdot \|$ is attached to a net $( n(\alpha) )_{\alpha \in I} \subset \ZZ \cup \left\{ -\infty \right\}$. If
  we have chosen notation not to reflect this fact it is in pursue of
  a lighter reading and understanding that the net $( n({\alpha)}
  )_{\alpha \in I}$, when needed, will be clear from the context.


\section{First properties}
\label{sec:firstproperties}

We summarise some properties which were known already for
higher topologies, or which are deduced immediately from the fact that these
topologies are locally convex. We also state some properties which do
not hold in general because they are known not to hold already for $n=2$.

The field $F$, equipped with a higher topology, is a locally convex
$K$-vector space, as shown in Proposition \ref{prop:highertopislocconv}. As such, it is a topological vector space. It is a
previously known fact that higher topologies are Hausdorff. In order to show
that this property holds in our setting it is enough to show that, given
$x \in F^\times$, there is an admissible seminorm $\| \cdot \|$ for which
$\|x\| \neq 0$. If the $\alpha$-coefficient of $x$ is
nonzero, any admissible seminorm for which $n(\alpha) > -\infty$ suffices.

Moreover, the reduction map $\OO_F \to \overline{F}$ is open when
$\OO_F$ is given the subspace topology and $\overline{F}$ a higher
topology compatible with the choice of coefficient field if $\car F \neq
\car \overline{F}$ \cite[Proposition 3.6.(v)]{camara-top-rat-pts-hlfs-arxiv}.

Multiplication $F \to F$ by a fixed nonzero element induces a
homeomorphism of $F$, but multiplication $\mu: F \times F \to F$ is not
continuous \cite[\textsection 1.3.2]{ihlf}; the immediate reason why this is the
case being that for any open lattice $\Lambda$ we have $\mu(\Lambda,\Lambda) = F$.

Higher topologies are not first-countable and therefore not metrizable
\cite[\textsection 1.3.2]{ihlf}.
Moreover, in general, $F$ is not bornological, barrelled, reflexive nor
nuclear and its rings of integers are not c-compact nor compactoid, the
first counterexample being the field $K\curlyb{t}$ \cite{camara-fa2dlfs}.

\begin{rmk}
  \label{rmk:powerseriesconverge}
  Power series in the
  system of parameters $t_1, \ldots, t_{n-1}$ are convergent in the
  higher topology. 
  If $x = \sum_{\alpha \in I} x(\alpha) t^\alpha \in F$, we define a net
  $(s(\alpha))_{\alpha \in I} \subset F$ by taking $s(\alpha) =
  \sum_{\alpha' \leq \alpha} x(\alpha') t^{\alpha'}$. If $\| \cdot
  \|$ is an admissible seminorm on $F$, then as $\alpha \in I$ grows,
  the value $\| x - s(\alpha) \|$ becomes arbitrarily small.
\end{rmk}

\section{Bounded $\OO$-submodules}
\label{sec:boundedsubmodules}

Let us study the bounded $\OO$-submodules of $F$. We start by describing
a basis for the Von-Neumann bornology on $K \into F$.

\begin{prop}
  \label{prop:boundedsubmodules}
  Let $( k(\alpha) )_{\alpha \in I} \subset \ZZ \cup \left\{
  \infty \right\}$ be a net subjected to the conditions:
  \begin{enumerate}
    \item For every $l \in \left\{ r+1, \ldots, n-1 \right\}$ and indices
      $i_{l+1}, \ldots, i_{n-1} \in \ZZ$ there is an index $j_0 \in \ZZ$
      such that for every $j < j_0$ we have $k(\alpha) = \infty$ for all
      $\alpha \in I(j, i_{l+1}, \ldots, i_{n-1})$.
    \item For every $l \in \left\{ 1, \ldots, r \right\}$ and
      $i_{l+1}, \ldots, i_{n-1} \in \ZZ$, there is an integer $d$ such
      that $k(\alpha) \geq d$ for all $\alpha \in I(i_{l+1}, \ldots,
      i_{n-1})$.
  \end{enumerate}
  The Von-Neumann bornology of $F$ admits as a basis the collection of
  $\OO$-submodules
  \begin{equation}
    \label{eqn:boundedsets}
    B = \sum_{\alpha \in I} \pp^{k(\alpha)} t^\alpha
  \end{equation}
  as $(k(\alpha))_{\alpha \in I}$ varies over the nets specified above.
\end{prop}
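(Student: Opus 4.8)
The plan is to mimic the inductive structure used in the proof of Proposition \ref{prop:highertopislocconv}, replacing open lattices by bounded $\OO$-submodules, and to reduce to the two-dimensional case \cite[\textsection 4]{camara-fa2dlfs} at the base of the induction. First I would recall that a subset $B$ of a locally convex $K$-vector space is bounded (in the Von-Neumann sense) if and only if it is absorbed by every neighbourhood of zero in a basis, so by Proposition \ref{prop:highertopislocconv} it suffices to test absorption against the open lattices $\Lambda = \sum_\alpha \pp^{n(\alpha)} t^\alpha$. Thus I would first check that the submodules $B = \sum_\alpha \pp^{k(\alpha)} t^\alpha$ appearing in the statement are genuinely bounded: given an open lattice $\Lambda$ with net $(n(\alpha))_{\alpha\in I}$, I must produce an integer $m$ with $\pp^m B \subset \Lambda$, i.e.\ $m + k(\alpha) \geq n(\alpha)$ for all $\alpha$. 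The content here is that the conditions (i),(ii) on $(k(\alpha))$ and the conditions (i),(ii) on $(n(\alpha))$ from Proposition \ref{prop:highertopislocconv} interlock so that the differences $n(\alpha) - k(\alpha)$ are bounded above: along the "mixed characteristic" directions $l \in \{1,\dots,r\}$, $n(\alpha)$ is bounded above and $k(\alpha)$ is bounded below, while along the "equal characteristic" directions $l \in \{r+1,\dots,n-1\}$, for $j$ sufficiently negative $k(\alpha) = \infty$, and for the finitely many remaining slices one argues as in the $n=2$ case; this is essentially the bornology computation of \cite[\textsection 4]{camara-fa2dlfs} carried out coefficient-slice by coefficient-slice.

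Next I would show that every bounded $\OO$-submodule is contained in one of the form \eqref{eqn:boundedsets}. Given a bounded $B \subset F$, define $k(\alpha) = \inf\{ v_K(x(\alpha)) : x \in B \} \in \ZZ \cup \{\infty\}$, where $v_K$ is the normalized valuation on $K$; equivalently $\pp^{k(\alpha)}$ is the smallest power of $\pp$ containing the $\alpha$-th coefficient of every element of $B$. By construction $B \subset \sum_\alpha \pp^{k(\alpha)} t^\alpha$, so the task is to verify that this net satisfies (i) and (ii). Each such condition says that $B$ projects into a bounded subset of a suitable quotient or "sub-slice" of $F$, and follows from boundedness of $B$ by testing against a well-chosen family of admissible seminorms (Proposition \ref{prop:seminorms}): for condition (ii), an admissible seminorm supported on the slice $I(i_{l+1},\dots,i_{n-1})$ with all $n(\alpha) = 0$ there shows $k(\alpha)$ is bounded below; the second half of (ii) — here absent because $k$ may be $+\infty$ but compare the second half of (ii) in Proposition \ref{prop:highertopislocconv} — is not needed since the bornology conditions are the "dual" weak ones. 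For condition (i), one uses that the reduction-type maps peeling off the last variable are compatible with the higher topology (as in \textsection \ref{sec:firstproperties}) together with the inductive description of the bornology on $L$.

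Finally I would assemble the induction exactly as in Proposition \ref{prop:highertopislocconv}: writing $F = L\roundb{t_{n-1}}$ (case $r \leq n-2$) or $F = L\curlyb{t_{n-1}}$ (case $r = n-1$), a bounded subset of $F$ is one all of whose $t_{n-1}$-coefficient slices $B_i = \{\text{slice of } B \text{ in degree } i\}$ are bounded in $L$, uniformly, with the extra constraint coming from the shape of neighbourhoods \eqref{eqn:basicnhoodequalchar} resp.\ \eqref{eqn:basicnhoodmixedchar}; invoking the induction hypothesis to describe each $B_i$ by a net $(k(\beta,i))_{\beta \in J}$ and collating gives the net $(k(\alpha))_{\alpha\in I}$ with precisely conditions (i) and (ii). I expect the main obstacle to be purely notational: keeping the quantifier order straight when translating "$V_i \to L$ as $i \to \infty$" and "$\pp_L^c \subset V_i$" into the two families of slice-wise conditions on $(k(\alpha))$, and checking that in the $r \leq n-2$ case the condition "$k(\alpha) = \infty$ for $j \ll 0$" is the correct bornological mirror of "$U_i = L$ for $i \gg 0$" after the index reversal built into the inverse lexicographical order. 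No genuinely new functional-analytic input is needed beyond what is already in \cite{camara-fa2dlfs} for $n=2$.
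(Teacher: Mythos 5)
Your skeleton matches the paper's: induct by peeling off $t_{n-1}$, and split into (a) the submodules $B=\sum_\alpha \pp^{k(\alpha)}t^\alpha$ are bounded, and (b) every bounded set is contained in such a $B$. Part (a) of your sketch is essentially the paper's computation that $\sup_\alpha\bigl(n(\alpha)-k(\alpha)\bigr)<\infty$, handled slice-by-slice in the equal-characteristic direction and by the "$n$ bounded above, $k$ bounded below" observation in the mixed-characteristic one; that is fine. The genuine gap is in part (b), in your verification of condition (ii). The test seminorm you propose --- $n(\alpha)=0$ on the whole slice $I(i_{l+1},\ldots,i_{n-1})$ and $-\infty$ elsewhere --- is \emph{not} admissible: for $l\in\{1,\ldots,r\}$ the nets of Proposition \ref{prop:highertopislocconv} must satisfy $n(\alpha)\to-\infty$ along $I(k,i_{l+1},\ldots,i_{n-1})$ as $k\to\infty$, and a net identically zero on an infinite slice in a mixed-characteristic direction violates exactly that. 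Already for $n=2$ and $F=K\curlyb{t}$ the sup-seminorm $\sum_i x_i t^i\mapsto\sup_i|x_i|$ is not continuous for the higher topology (its unit ball contains no open lattice), so boundedness of $B$ does not directly yield a uniform bound on the coefficients over the slice, and the lower bound $k(\alpha)\geq d$ cannot be read off from a single seminorm of the kind you describe.

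The necessity direction therefore needs the contradiction argument with more carefully weighted seminorms, which is where the actual content of the proof lies. If condition (ii) fails for $l=n-1$ (case $F=L\curlyb{t_{n-1}}$), one extracts indices $\alpha_h\in I(j_h)$ and elements $\xi_h\in D$ with $|x(\alpha_h)|\to\infty$; when the $j_h$ decrease one may take $n(\alpha_h)=0$, but when the $j_h$ increase admissibility forces $n(\alpha_h)\to-\infty$, and the paper's trick is to take $n(\alpha_h)\approx v(x(\alpha_h))/2$, which is compatible with admissibility (since $v(x(\alpha_h))\to-\infty$) while still giving $\|\xi_h\|\geq q^{n(\alpha_h)-v(x(\alpha_h))}\to\infty$. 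Similarly, your treatment of condition (i) via "reduction-type maps" is too vague to carry the argument; the paper's proof constructs, along a decreasing sequence $j_h\to-\infty$ with nonzero coefficients $x(\alpha_h)$, the admissible net $n(\alpha_h)=-j_h+v(x(\alpha_h))$ (and $-\infty$ elsewhere), so that $\|\xi_h\|\geq q^{-j_h}\to\infty$. Your reduction "set $k(\alpha)=\inf_{x\in B}v(x(\alpha))$ and verify (i),(ii) for this net" is a perfectly good framing (slightly more direct than the paper's passage through bounded $C_i\subset L$ and induction), but without these explicit admissible-seminorm constructions the proof of necessity is missing.
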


\begin{proof}
  First, let us show that the sets $B$ are bounded. As we will use
  induction on $n$, the case $n=2$ is thoroughly explained in
  \cite[Propositions 4.2 and 4.4]{camara-fa2dlfs}. 
  
  Let $\| \cdot \|$ be
  an admissible seminorm attached to the net $( n(\alpha)
  )_{\alpha\in I} \subset \ZZ \cup \left\{ -\infty \right\}$.

  Let $x = \sum_{\alpha} x(\alpha) t^\alpha$. We have
  \begin{equation*}
    \| x \| = \sup_\alpha |x(\alpha)| q^{n(\alpha)} \leq \sup_\alpha
    q^{n(\alpha) -
    k(\alpha)}
  \end{equation*}
  and therefore it is enough to prove that the set
  $\left\{ n(\alpha) - k(\alpha) \right\}_{\alpha \in I} \subset \ZZ \cup
  \left\{ - \infty \right\}$ is bounded above.

  We distinguish two cases. Suppose first that $r \leq n-2$. Then $F =
  L\roundb{t_{n-1}}$. On one hand, there is an index
  $j_0 \in \ZZ$ such that $k(\alpha) = \infty$ for every $\alpha \in
  I(j)$, $j < j_0$. On the other hand, there is an index $j_1 \in \ZZ$
  such that $n(\alpha)=-\infty$ for every $\alpha \in I(j)$, $j > j_1$. It
  is therefore enough to show that each of the finitely many sets
  \begin{equation*}
    N(j) = \left\{ n(\alpha) - k(\alpha);\; \alpha \in I(j) \right\},\quad
    j_0 \leq j \leq j_1
  \end{equation*}
  are bounded above. But for
  each $j \in \ZZ$, the net $( n(\alpha) )_{\alpha \in I(j)}$ defines
  an admissible seminorm and $\sum_{\beta\in J} \pp^{k(\beta,j)}
  t^{\beta} \subset L$ is a bounded $\OO$-submodule; this implies the
  boundedness of $N(j)$.

  The case in which $r = n-1$, and therefore $F = L\curlyb{t_{n-1}}$, is
  simpler: we have that all the $n(\alpha)$ are bounded above and all the
  $k(\alpha)$ are bounded below; therefore the differences $n(\alpha) -
  k(\alpha)$ are bounded above.

  Second, we have to show that any bounded subset of $F$ is contained in
  an $\OO$-submodule of $F$ of the form (\ref{eqn:boundedsets}). 
  
  The elements of any bounded subset $D \subset F$ cannot have
  $t_{n-1}$-expansions with arbitrarily large coefficients in $L$ in a fixed
  degree: otherwise, suppose that this is not the case and that for $j
  \in \ZZ$ the $j$-th coefficients in the $t_{n-1}$-expansions of the
  elements in $D$ may be arbitrarily large. By choosing any admissible seminorm with $n(\alpha) >
  -\infty$ for some $\alpha \in I(j)$ we easily obtain that $D$ is not
  bounded. 
  
  Hence, $D$ is contained in an $\OO$-submodule of the form $C = \sum_{i \in \ZZ} C_i t_{n-1}^{i}$ with $C_i \subset L$ bounded $\OO$-submodules. By
  induction hypothesis, let us write
  \begin{equation*}
    C_i = \sum_{\beta \in J} \pp^{k(\beta,i)}t^\beta,\quad i \in \ZZ,
  \end{equation*}
  with $k(\beta,i) \in \ZZ \cup \left\{ \infty \right\}$ satisfying the
  conditions exposed in the statement of the proposition.

  By letting $\alpha = (\beta, i) \in I$, we may write $C = \sum_{\alpha \in I}
  \pp^{k(\alpha)}t^\alpha$ and we only have to show that the indices
  $k(\alpha)$ might be taken to satisfy the conditions exposed in the
  proposition. Suppose that this is not the case, and let us consider
  separate cases again.
  
  First, if $r \leq n-2$ and $F = L\roundb{t_{n-1}}$, the indices
  $k(\alpha)$ may be taken to satisfy condition 
  (ii) by induction hypothesis. Condition (i) is also
  satisfied by induction hypothesis for every $l \in \left\{ r+1, \ldots,
  n-2 \right\}$. So we only have to show that if the $k(\alpha)$ may not
  be taken to satisfy condition (i) in the case $l = n-1$, then $D$
  cannot be bounded. 
  

  If the condition does not hold, then there is a decreasing sequence
  $(j_h)_{h \geq 0} \subset \ZZ_{<0}$, an index $\alpha_h \in I(j_h)$ and
  an element $\xi_h \in D$ such that its $\alpha_h$-coefficient, which we
  label $x(\alpha_h)$, is nonzero. Let
  \begin{equation*}
    n(\alpha) = \left\{ 
    \begin{array}{ll}
      -j_h + v(x(\alpha_h)),& \text{ if } \alpha = \alpha_h \in I(j_h),\\
      -\infty & \text{ otherwise}.
    \end{array}
    \right.
  \end{equation*}
  The net $( n(\alpha) )_{\alpha \in I}$ defines
  an admissible seminorm $\| \cdot \|$. Since $\| \xi_h \| \geq
  |x(\alpha_h)| q^{n(\alpha_h)} = q^{-j_h}$, $D$ cannot be bounded.
  

  Second, suppose that $r=n-1$ and $F = L\curlyb{t_{n-1}}$. By induction
  hypothesis, we know that condition (ii) holds for every $l \in \left\{
  1, \ldots, n-2 \right\}$, so we suppose that it does not hold in the
  case $l = n-1$. In such case, at least one of the following must happen:
  \begin{enumerate}[1.]
    \item There is a decreasing sequence $(j_h)_{h \geq 0} \subset
      \ZZ_{<0}$, an index $\alpha_h \in I(j_h)$ and $\xi_{h} \in D$
      such that, if $x(\alpha_h)$ denotes the $\alpha_{h}$-coefficient of
      $\xi_h$, we have $|x(\alpha_h)| \to \infty$ as $h \to \infty$.
    \item There is an increasing sequence $(j_h)_{h \geq 0} \subset
      \ZZ_{\geq 0}$, an index $\alpha_h \in I(j_h)$ and $\xi_{h} \in D$
      such that, if $x(\alpha_h)$ denotes the $\alpha_{h}$-coefficient of
      $\xi_h$, we have $|x(\alpha_h)| \to \infty$ as $h \to \infty$.
  \end{enumerate}
  Suppose that condition 1 holds. In this case, let
  \begin{equation*}
    n(\alpha) = \left\{ 
    \begin{array}{ll}
      0,&\text{ if } \alpha=\alpha_h \text{ for some } h \geq 0,\\
      -\infty & \text{ otherwise. }
    \end{array}
    \right.
  \end{equation*}
  The net $( n(\alpha) )_{\alpha \in I}$ defines an
  admissible seminorm $\| \cdot \|$. Now, for $h \geq 0$, we have
  $\| \xi_h \| \geq q^{-v(x(\alpha_h))}$ and hence $D$ cannot be bounded. 
  



  Finally, if condition 1 does not hold, then condition 2 must happen. In
  such case, let
  \begin{equation*}
    n(\alpha) = \left\{ 
    \begin{array}{ll}
      (v(x(\alpha_h)) - 1)/2,&\text{ if } \alpha=\alpha_h
      \text{ for some } h \geq 0
      \text{ and } v(x(\alpha_h)) \text{ odd.}\\
      v(x(\alpha_h))/2,&\text{ if } \alpha=\alpha_h \text{ for some } h
      \geq 0
      \text{ and } v(x(\alpha_h)) \text{ even.}\\
      -\infty&\text{ otherwise.}
    \end{array}
    \right.
  \end{equation*}
  The net $( n(\alpha) )_{\alpha \in I}$ defines an
  admissible seminorm. Moreover, we have $n(\alpha_h) - v(x(\alpha_h))
  \to \infty$ as $h \to \infty$. We have that $\| \xi_h \| \geq
  |x(\alpha_h)|q^{n(\alpha_h)} = q^{n(\alpha_h) - v(x(\alpha_h))}$
  and therefore $D$ cannot be bounded.
 %
 %
\end{proof}

\begin{df}
  We will say that an $\OO$-submodule of the form (\ref{eqn:boundedsets})
  is a {\it basic bounded submodule} of $F$.
\end{df}

The following result is necessary in order to compare compactoids and
c-compacts in the sequel.

\begin{prop}
  \label{prop:boundedsubmodsarecomplete}
  The submodules $B$ in Proposition \ref{prop:boundedsubmodules} are
  complete.
\end{prop}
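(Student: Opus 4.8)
The plan is to argue by induction on $n$, the two-dimensional case being the one treated in \cite{camara-fa2dlfs}. For the inductive step I would write $F = L\roundb{t_{n-1}}$ when $r \leq n-2$ and $F = L\curlyb{t_{n-1}}$ when $r = n-1$, and decompose the submodule as $B = \sum_{i \in \ZZ} C_i t_{n-1}^i$ with $C_i = \sum_{\beta \in J} \pp^{k(\beta,i)} t^\beta$. For each fixed $i$ the conditions imposed on the net $(k(\alpha))_{\alpha \in I}$ in Proposition \ref{prop:boundedsubmodules} specialise to the corresponding conditions on $(k(\beta,i))_{\beta \in J}$, so each $C_i$ is a basic bounded submodule of $L$ and hence, by the induction hypothesis, complete. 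Since higher topologies are not metrizable, completeness must be tested against Cauchy nets indexed by an arbitrary directed set rather than against sequences; so I take a Cauchy net $(x_\lambda)_{\lambda \in \Lambda}$ in $B$, write $x_\lambda = \sum_i y_{\lambda,i} t_{n-1}^i$, and aim to exhibit a limit inside $B$.

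The first step is to observe that for every $i$ the coefficient map $F \to L$, $\sum_j y_j t_{n-1}^j \mapsto y_i$, is continuous and $K$-linear: the neighbourhood of zero of $F$ obtained by placing a given neighbourhood $M$ of zero of $L$ in degree $i$ and the whole of $L$ in every other degree is mapped into $M$. Hence each $(y_{\lambda,i})_\lambda$ is a Cauchy net in $C_i$, which by the induction hypothesis converges to some $y_i \in C_i$; put $x = \sum_i y_i t_{n-1}^i$.

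The crux is to check that $x$ actually lies in $F$; once this is done it lies in $B$, since $y_i \in C_i$ for all $i$. When $F = L\roundb{t_{n-1}}$ this is immediate: condition (i) of Proposition \ref{prop:boundedsubmodules} forces $C_i = 0$ for $i$ sufficiently negative, so $x$ is a Laurent series over $L$. When $F = L\curlyb{t_{n-1}}$, condition (ii) yields an integer $d$ with $k(\alpha) \geq d$ for every $\alpha$, so every $C_i$ lies in $\pp_L^d$ and the values $v_L(y_i)$, where $v_L$ is the normalised valuation of $L$, are bounded below; what remains is to see that $v_L(y_i) \to \infty$ as $i \to -\infty$. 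For this, fix $m \in \ZZ$ and apply the Cauchy condition to the neighbourhood of zero of $F$ whose defining sequence equals $\pp_L^m$ in every nonpositive degree and $L$ in every positive degree: there is $\lambda_0$ with $y_{\lambda,j} - y_{\mu,j} \in \pp_L^m$ for all $\lambda, \mu \geq \lambda_0$ and all $j \leq 0$. Since $x_{\lambda_0} \in F$ there is $N \leq 0$ with $y_{\lambda_0,j} \in \pp_L^m$ for $j < N$, hence $y_{\lambda,j} \in \pp_L^m$ for all $\lambda \geq \lambda_0$ and $j < N$; as $\pp_L^m$ is closed in $L$ (being the intersection of the preimages of $\pp^m$ under the continuous coefficient maps $L \to K$), passing to the limit along the net gives $y_j \in \pp_L^m$ for $j < N$. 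Since $m$ was arbitrary, $v_L(y_j) \to \infty$ as $j \to -\infty$, so $x \in F$. It then remains to check that $x_\lambda \to x$: given a basic neighbourhood $\Lambda' = \sum_j V_j t_{n-1}^j$ of zero, pick $\lambda_0$ with $x_\lambda - x_\mu \in \Lambda'$ for $\lambda, \mu \geq \lambda_0$; fixing $\lambda \geq \lambda_0$ and letting $\mu$ run along the net, each coefficient $y_{\lambda,j} - y_j$ lands in $V_j$, so $x_\lambda - x \in \Lambda'$.

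I expect the real obstacle to be precisely the mixed characteristic half of the crux step: neighbourhoods of zero of $F$ pin down the coefficients of negative $t_{n-1}$-degree only down to a fixed power of $\pp_L$, whereas membership in $F$ demands that those coefficients tend to zero, and the two can only be reconciled by using that every term $x_\lambda$ of the net already lies in $F$. The rest is routine bookkeeping with the multi-index notation and the two neighbourhood-basis constructions used to define the higher topology.
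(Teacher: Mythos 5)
Your overall strategy (coefficient projections are continuous, the $C_i$ are complete by induction, then verify that the limit series lies in $F$ using that every term of the net does) is the right one, and it is essentially what the paper's terse proof leaves implicit; but the step you yourself single out as the crux contains a genuine gap for $n\geq 3$. In the case $F=L\curlyb{t_{n-1}}$ you test the Cauchy condition against the set $\sum_j V_j t_{n-1}^j$ with $V_j=\pp_L^m$ for $j\leq 0$ and $V_j=L$ for $j>0$. This is \emph{not} a neighbourhood of zero of $F$: in the inductive step $L=K\curlyb{t_1}\cdots\curlyb{t_{n-2}}$ with $n-2\geq 1$, and $\pp_L^m$ is not a neighbourhood of zero of $L$ for the higher topology, because by Proposition \ref{prop:highertopislocconv} every open lattice $\sum_{\beta}\pp^{n(\beta)}t^\beta$ of $L$ has $n(\beta)\to-\infty$ along the brace variables, hence contains elements of arbitrarily negative valuation and is never contained in $\pp_L^m$. (When $n=2$, i.e. $L=K$, the set you use is a legitimate neighbourhood, which is why the argument looks correct at the base of the induction --- but the inductive step is exactly where it breaks.) Consequently the Cauchy condition gives no uniform statement of the form ``$y_{\lambda,j}-y_{\mu,j}\in\pp_L^m$ for all $j\leq 0$'', and your proof of $v_L(y_j)\to\infty$ as $j\to-\infty$ does not go through.

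The gap can be closed by arguing by contradiction with a lattice adapted to the putative bad positions, in the style of the proof of Proposition \ref{prop:boundedsubmodules}. If $v_L(y_j)\not\to\infty$ as $j\to-\infty$, there are $m\in\ZZ$, a decreasing sequence $j_h\to-\infty$ and, for each $h$, a multi-index $\alpha_h\in I(j_h)$ such that the $K$-coefficient $x(\alpha_h)$ of the limit has $v(x(\alpha_h))<m$. The net $n(\alpha)=m$ if $\alpha=\alpha_h$ for some $h$ and $n(\alpha)=-\infty$ otherwise satisfies the conditions of Proposition \ref{prop:highertopislocconv} (there is at most one $\alpha_h$ in each slice with fixed last coordinate, and $j_h\to-\infty$), so it defines an open lattice $\Lambda$. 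Applying the Cauchy condition to $\Lambda$, using that the coefficient projections $\pi_{\alpha_h}$ are continuous and that $\pp^m$ is closed in $K$, one finds a single $\lambda_0$ with $v(x_{\lambda_0}(\alpha_h))<m$ for all $h$; this contradicts $x_{\lambda_0}\in F$, whose $t_{n-1}$-coefficients must tend to zero in $L$ as $j\to-\infty$. With this replacement (the rest of your argument --- vanishing of $C_j$ for $j\ll 0$ in the case $F=L\roundb{t_{n-1}}$, and the final check that $x_\lambda\to x$ --- is fine) the proof is complete.
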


\begin{proof}
  Let $B = \sum_{\alpha \in I} \pp^{k(\alpha)} t^\alpha$ with $(
  k(\alpha) )_{\alpha \in I} \subset \ZZ \cup \left\{ \infty
  \right\}$ satisfying conditions (i) and (ii) in the statement of
  Proposition \ref{prop:boundedsubmodules}.

  Let $H$ be a directed set and $(x(h))_{h \in H} \subset B$ a Cauchy net.
  Let us denote, for each $h \in H$, $x(h) = \sum_\alpha x(h)(\alpha)
  t^\alpha$ with $x(h)(\alpha) \in \pp^{k(\alpha)}$ for $\alpha \in I$.

  We have that, for a fixed $\alpha \in I$, $(x(h)(\alpha))_{h \in H}
  \subset \pp^{k(\alpha)}$ is a Cauchy net. As $\pp^{k(\alpha)}$ is
  complete, the net converges to an element $x(\alpha) \in
  \pp^{k(\alpha)}$.

  If the power series $\sum_{\alpha} x(\alpha) t^\alpha$ defines an
  element $x$ in $F$, then $x \in B$ and $x(h) \to x$. This is easy to
  check by induction on $n$ (the case $n = 2$ may be found in
  \cite[\textsection 5]{camara-fa2dlfs}).
\end{proof}

As we have explained, multiplication $\mu: F \times F \to F$ is not a
continuous map. However, we may shown that it is bounded.

\begin{prop}
  Multiplication $\mu: F \times F \to F$ is a bounded map.
\end{prop}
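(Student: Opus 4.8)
The plan is to show that $\mu$ maps a product of basic bounded submodules into a basic bounded submodule, which by Proposition \ref{prop:boundedsubmodules} suffices (since the basic bounded submodules form a basis for the Von-Neumann bornology, and the product of two bounded sets is bounded in $F \times F$ precisely when each factor is bounded). So let $B = \sum_{\alpha} \pp^{k(\alpha)} t^\alpha$ and $B' = \sum_{\alpha} \pp^{k'(\alpha)} t^\alpha$ be two basic bounded submodules, with nets $(k(\alpha))$ and $(k'(\alpha))$ satisfying conditions (i) and (ii) of Proposition \ref{prop:boundedsubmodules}. The first step is to write down, for $x = \sum_\alpha x(\alpha) t^\alpha \in B$ and $y = \sum_\alpha y(\alpha) t^\alpha \in B'$, the coefficients of the product: $(xy)(\gamma) = \sum_{\alpha + \beta = \gamma} x(\alpha) y(\beta)$, a sum which is finite modulo any fixed power of $\pp$ once one uses that power series in $t_1,\dots,t_{n-1}$ converge (Remark \ref{rmk:powerseriesconverge}) together with the support conditions defining $B$ and $B'$. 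One then has the valuation estimate $v((xy)(\gamma)) \geq \inf_{\alpha+\beta=\gamma} (k(\alpha) + k'(\beta))$, so the natural candidate is the net
\begin{equation*}
  m(\gamma) = \inf_{\alpha + \beta = \gamma} \left( k(\alpha) + k'(\beta) \right),
\end{equation*}
and I must check that $BB' \subset \sum_\gamma \pp^{m(\gamma)} t^\gamma$ and that $(m(\gamma))_{\gamma \in I}$ again satisfies conditions (i) and (ii).

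The containment $BB' \subset \sum_\gamma \pp^{m(\gamma)} t^\gamma$ is the easy direction: it follows from the valuation estimate above once one knows the sum defining each $(xy)(\gamma)$ has only finitely many terms of value below any given bound, which is exactly what the support conditions (i) on $B$ and $B'$ guarantee in the $t_{r+1}, \dots, t_{n-1}$ directions (the $t_1, \dots, t_r$ directions are unrestricted in support but the valuations are bounded below there, so this causes no convergence problem). The more delicate direction is verifying that $m(\gamma)$ satisfies conditions (i) and (ii) of Proposition \ref{prop:boundedsubmodules}. For condition (i): fix $l \in \{r+1, \dots, n-1\}$ and indices $i_{l+1}, \dots, i_{n-1}$; one wants $j_0$ so that $m(\gamma) = \infty$ for all $\gamma \in I(j, i_{l+1}, \dots, i_{n-1})$ with $j < j_0$. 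This should follow by combining the corresponding $j_0$'s for $k$ and $k'$: if $\gamma = \alpha + \beta$ has its $l$-th coordinate very negative, then at least one of $\alpha, \beta$ has its $l$-th coordinate below the relevant threshold (in one of the finitely many relevant higher-coordinate slices — here one uses the induction/support structure that for large negative $j$ along the chosen slice, all of $k$ or $k'$ is $\infty$), forcing the corresponding term to be $\infty$. For condition (ii): fix $l \in \{1, \dots, r\}$ and $i_{l+1}, \dots, i_{n-1}$; the lower bound $m(\gamma) \geq d + d'$ on the slice is immediate from $k \geq d$, $k' \geq d'$ on the respective slices.

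I would carry this out by induction on $n$, citing the case $n = 2$ from \cite{camara-fa2dlfs}; the inductive step mirrors the two-case split ($r \leq n-2$ versus $r = n-1$) used throughout the paper, writing $F = L\roundb{t_{n-1}}$ or $F = L\curlyb{t_{n-1}}$, observing that the $t_{n-1}$-coefficient of $xy$ in degree $i$ is $\sum_{i' + i'' = i} (\text{deg-}i' \text{ part of } x)(\text{deg-}i'' \text{ part of } y)$, a finite sum in the first case by the support condition and a sum converging in $L$ in the second case, reducing each term to the inductive hypothesis for multiplication on $L$ being bounded, and then checking the new condition in the $t_{n-1}$-direction by hand as sketched above.

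The main obstacle I anticipate is the bookkeeping in verifying condition (i) for $m(\gamma)$: one must be careful that "the $l$-th coordinate of $\gamma$ is very negative" genuinely forces the $l$-th coordinate of $\alpha$ or of $\beta$ to lie below the threshold \emph{within a slice that is relevant for $k$ or $k'$}, and this requires tracking how the fixed higher coordinates $i_{l+1}, \dots, i_{n-1}$ of $\gamma$ decompose as sums of the higher coordinates of $\alpha$ and $\beta$ — there are infinitely many such decompositions a priori, so one needs the support conditions to cut this down to finitely many relevant cases. Provided that reduction is handled cleanly (it is the same phenomenon that makes the product well-defined as a power series in the first place), the rest is routine.
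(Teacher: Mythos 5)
Your proposal is correct and is essentially the argument the paper intends: the paper's own proof is precisely the induction on $n$ you sketch at the end, citing the two-dimensional case and splitting into $F = L\roundb{t_{n-1}}$ and $F = L\curlyb{t_{n-1}}$, while your explicit convolution net $m(\gamma)=\inf_{\alpha+\beta=\gamma}\left(k(\alpha)+k'(\beta)\right)$ simply exhibits the basic bounded submodule containing $BB'$ directly. One small caveat: when $r\leq n-2$ your verification of condition (ii) is not quite ``immediate'' with a single bound $d+d'$, because the slices of $\alpha$ and $\beta$ are not determined by the slice of $\gamma$; it requires the same reduction to finitely many relevant slice pairs (via the support conditions (i) in the $t_{r+1},\ldots,t_{n-1}$ directions) that you correctly flag for condition (i), after which the lower bound is the minimum of finitely many sums $d+d'$ and the argument goes through.
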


\begin{proof}
  The argument for the proof is by induction on $n$. The case $n = 2$ is
  dealt with in \cite[Proposition 4.8]{camara-fa2dlfs}, and the same
  argument applies when looking at $F =L\curlyb{t_{n-1}}$ or $L\roundb{t_{n-1}}$
\end{proof}

\section{Compactoid $\OO$-submodules}
\label{sec:compactoidsubmodules}

The result below outlines which basic bounded submodules of $F$ are
compactoid, and thus describes a basis for the bornology on $F$ generated
by compactoid $\OO$-submodules.

\begin{prop}
  \label{prop:compactoidsubmodules}
  Let $(k(\alpha) )_{\alpha \in I} \subset
  \ZZ \cup \left\{ \infty \right\}$ be a net satisfying the
  conditions:
  \begin{enumerate}
    \item For every $l \in \left\{ r+1, \ldots, n-1 \right\}$ and indices
      $i_{l+1}, \ldots, i_{n-1} \in \ZZ$ there is an index $j_0 \in \ZZ$
      such that for every $j < j_0$ we have $k(\alpha) = \infty$ for all
      $\alpha \in I(j, i_{l+1}, \ldots, i_{n-1})$.
    \item For every $l \in \left\{ 1, \ldots, r \right\}$ and
      $i_{l+1}, \ldots, i_{n-1} \in \ZZ$, there is an integer $d \in \ZZ$
      such that $k(\alpha) \geq d$ for all $\alpha \in I(i_{l+1}, \ldots,
      i_{n-1})$ and we have that $k(\alpha) \to \infty$ for $\alpha \in
      I(j,i_{l+1}, \ldots, i_{n-1})$, as $j \to -\infty$.
  \end{enumerate}
  Then the $\OO$-submodule of $F$ given by 
  \begin{equation}
    \label{eqn:basiccpctoidmodule}
    B = \sum_{\alpha} \pp^{k(\alpha)} t^\alpha 
  \end{equation}
  is compactoid. 
  
  The $\OO$-submodules of the form (\ref{eqn:basiccpctoidmodule}) are the
  only compactoid submodules amongst basic bounded submodules, and define
  a basis for the bornology on $F$ defined by compactoid submodules.
\end{prop}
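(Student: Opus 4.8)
The plan is to proceed by induction on $n$, with the base case $n=2$ supplied by the analogous statement in \cite[\textsection 5]{camara-fa2dlfs}. For the inductive step I would split, as in the proofs of Propositions \ref{prop:highertopislocconv} and \ref{prop:boundedsubmodules}, according to whether $r \leq n-2$ (so $F = L\roundb{t_{n-1}}$) or $r = n-1$ (so $F = L\curlyb{t_{n-1}}$), writing $\alpha = (\beta,i)$ and $M_i = \sum_{\beta \in J}\pp^{k(\beta,i)}t^\beta \subset L$. In either case, condition (i) for $l \in \{r+1,\ldots,n-2\}$ and condition (ii) for $l \in \{1,\ldots,r\}$ say exactly that each $M_i$ satisfies the hypotheses of the proposition at level $n-1$, hence by induction each $M_i$ is a compactoid $\OO$-submodule of $L$; the remaining conditions (condition (i) with $l=n-1$ in the first case, or condition (ii) with $l=n-1$ together with the boundedness constraint in the second case) encode precisely how the $M_i$ must shrink as $i \to -\infty$.

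For the first assertion — that $B = \sum_{\alpha}\pp^{k(\alpha)}t^\alpha$ is compactoid — I would unwind the definition: given an admissible seminorm $\|\cdot\|$ attached to a net $(n(\alpha))_{\alpha \in I}$ and given $\varepsilon > 0$, I must exhibit a finitely generated $\OO$-submodule $G \subset F$ with $B \subset G + \{x : \|x\| \leq \varepsilon\}$. By the conditions on $(n(\alpha))$, the lattice used for $\|\cdot\|$ only sees finitely many values of $i$ nontrivially when $r \leq n-2$; in the mixed-characteristic factor one uses instead that $n(\alpha) \to -\infty$ as $i \to \infty$ and that $k(\alpha) \to \infty$ as $i \to -\infty$, so that $n(\alpha) - k(\alpha)$ is genuinely small outside a finite window of degrees $i$. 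Inside that finite window, each $M_i$ is compactoid in $L$ by induction, so finitely many generators in each of finitely many degrees suffice; assembling these and using that a finite sum and finite union of compactoids is compactoid gives $G$. I expect this assembly step — keeping track of which degrees $i$ contribute and matching the two decay conditions on $n(\alpha)$ and $k(\alpha)$ so that the ``tails'' land inside $\varepsilon B_{\|\cdot\|}$ — to be the main technical obstacle, essentially a bookkeeping elaboration of the $n=2$ argument.

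For the converse, that every compactoid basic bounded submodule $B = \sum_\alpha \pp^{k(\alpha)}t^\alpha$ already satisfies (i) and (ii), I would argue contrapositively in the style of the proof of Proposition \ref{prop:boundedsubmodules}: if condition (i) fails at level $l = n-1$, or if the $k(\alpha) \to \infty$ requirement in (ii) fails, then there is a sequence of degrees $j_h$ and indices $\alpha_h \in I(j_h)$ along which $k(\alpha_h)$ stays bounded above; choosing an admissible seminorm supported on the $\alpha_h$ exhibits a bounded-below sequence of coordinate directions that cannot be covered by a finitely generated module up to $\varepsilon$, contradicting compactoidity. Combined with Proposition \ref{prop:boundedsubmodules}, which already shows the submodules of the form \eqref{eqn:boundedsets} form a basis for the von Neumann bornology, this shows the compactoid ones among them are exactly those of the form \eqref{eqn:basiccpctoidmodule}; and since every compactoid submodule is bounded, hence contained in some basic bounded submodule, a final check that it is then contained in one satisfying the stronger conditions (ii) — again by a contrapositive argument — shows these form a basis for the compactoid bornology.
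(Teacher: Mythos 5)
Your plan follows essentially the same route as the paper's proof: induction on $n$ through the slices $M_i \subset L$, the finite-window comparison of the nets $(n(\alpha))$ and $(k(\alpha))$ for the forward direction, and a contrapositive argument using an admissible seminorm/open lattice supported on a bad sequence $\alpha_h \in I(j_h)$ with $j_h \to -\infty$ for the converse. The one step you leave implicit --- why finitely many elements $x_1,\ldots,x_m$ cannot cover those directions --- is precisely the paper's observation that the coefficients $x_l(\alpha)$ of each generator tend to zero as $j \to -\infty$ in $F = L\curlyb{t_{n-1}}$, so applying $\pi_{\alpha_h}$ for $h$ large gives $\pp^{k(\alpha_h)} \subset \pp^{M}$ and hence the desired contradiction.
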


In the proof to this proposition we shall need to consider the projection
maps to the coefficients of the $\alpha$-expansions of elements of $F$. For 
every $\alpha_0 \in I$, consider the continuous linear form:
\begin{equation*}
  \pi_{\alpha_0}: F \to K,\quad \sum_{\alpha} x(\alpha) t^\alpha \mapsto
  x(\alpha_0).
\end{equation*}

\begin{proof}
  The result holds for $n = 2$ as shown in \cite[\textsection 5]{camara-fa2dlfs}.
  
  First, let us show that the submodule $B$ as in
  (\ref{eqn:basiccpctoidmodule}) is compactoid.
  Let $\Lambda = \sum_\alpha \pp^{n(\alpha)} t^\alpha$ be an open lattice.
  We will show that there exist elements $x_1, \ldots, x_m \in F$ such
  that $B \subset \Lambda + \OO x_1+\cdots+\OO x_m$.

  Regardless of the value of $r \in \left\{ 0, \ldots, n-1 \right\}$,
  there are two indices $j_0, j_1 \in \ZZ$ such that 
  \begin{equation}
    k(\alpha) \geq n(\alpha), \quad \text{for all } \alpha \in I(j) \text{
    with } j < j_0 \text{ or } j > j_1.
    \label{eqn:compactoidsubmodules1}
  \end{equation}
  if $j_0 > j_1$ then $B \subset \Lambda$ and we are done. Henceforth, we
  assume $j_0 \leq j_1$.

  Let us examine the situation for $j_0 \leq j \leq j_1$. For a fixed
  such $j$, let $\alpha = (\beta, j)$ with $\beta \in J$. By induction
  hypothesis, the $\OO$-submodule
  \begin{equation*}
    \sum_{\beta\in J} \pp^{k(\beta,j)}t^\beta \subset L
  \end{equation*}
  is compactoid. Similarly, for a fixed such $j$, $\sum_\beta
  \pp^{n(\beta,j)}t^\beta$ is an open lattice in $L$. Therefore, there
  exist a finite number of elements $y_{j,1},\ldots,y_{j,m_j} \in L$ for
  which we have, for $j_0 \leq j \leq j_1$,
  \begin{equation*}
    \sum_{\beta} \pp^{k(\beta,j)} t^\beta \subset \sum_\beta
    \pp^{n(\beta,j)} t^\beta + \OO y_{j,1}+\cdots+\OO 
    y_{j,m_j}.
  \end{equation*}
  Now, this implies that
  \begin{equation*}
    \sum_{j=j_0}^{j_1} \left( \sum_{\beta \in J} \pp^{k(\beta,j)} t^\beta
    \right) t_{n-1}^j \subset \sum_{j=j_0}^{j_1} \left( \sum_{\beta \in
    J} \pp^{n(\beta,j)} t^\beta + \sum_{s=1}^{m_j} \OO y_{j,s} \right)t_{n-1}^j.
  \end{equation*}
  We rewrite this last equation as:
  \begin{equation}
    \sum_{\substack{\alpha \in I(j)\\ j_0\leq j \leq j_1}} \pp^{k(\alpha)} t^\alpha
    \subset \sum_{\substack{\alpha \in I(j)\\j_0 \leq j \leq j_1}}
    \pp^{n(\alpha)}
    t^\alpha + \left( \sum_{j=j_0}^{j_1} \sum_{s=1}^{m_j} \OO y_{j,s}t_{n-1}^j \right).
    \label{eqn:compactoidsubmodules2}
  \end{equation}
  The fact that
  \begin{equation*}
    B \subset \Lambda +\left( \sum_{j=j_0}^{j_1} \sum_{s=1}^{m_j}
    \OO y_{j,s}t_{n-1}^j \right)
  \end{equation*}
  follows from (\ref{eqn:compactoidsubmodules1}) and
  (\ref{eqn:compactoidsubmodules2}).

  Second, let us show how any compactoid $\OO$-submodule of $F$ is
  contained in one of the form (\ref{eqn:basiccpctoidmodule}). Since
  compactoid $\OO$-submodules are bounded, it is enough to show that any
  basic bounded submodule $C = \sum_{\alpha} \pp^{k(\alpha)} t^\alpha$ is
  compactoid if and only if the indices $k(\alpha)$ satisfy conditions (i)
  and (ii) in the statement of the Proposition. We proceed by induction,
  the result holds for $n=2$ as mentioned above.

  Now, suppose $C$ is compactoid. Then, for every $j \in \ZZ$, the
  $\OO$-submodule of $L$ given by
  \begin{equation*}
    C_j = \sum_{\beta} \pp^{k(\beta,j)} t^\beta
  \end{equation*}
  is compactoid.

  Next, we distinguish cases. Suppose that $r \leq n-2$, so that $F =
  L\roundb{t_{n-1}}$. In such case, by induction hypothesis, we only need
  to check that the indices $k(\alpha)$ satisfy condition (i) for $l =
  n-1$. But if this condition does not hold, then from the proof of
  Proposition \ref{prop:boundedsubmodules} we deduce that $C$ cannot be
  compactoid, as it is not bounded. So there is nothing more to say in this case.
  
%
%
%
  Finally, suppose that $r =n-1$, so that $F=L\curlyb{t_{n-1}}$. By
  hypothesis induction, the indices $k(\alpha)$ satisfy condition (ii) in
  the statement of the Proposition for $1 \leq l \leq  n-2$. If condition
  (ii) for $l = n-1$ does not hold, then there is a decreasing sequence
  $(j_h)_{h \geq 0} \subset \ZZ_{<0}$ and an index $\alpha_h \in
  I(j_h)$ for each $h \geq 0$ such that the sequence
  $(k(\alpha_h))_{h \geq 0}$ is
  bounded above. Let $M \in \ZZ$ be such that $k(\alpha_h) < M$ for
  every $h \geq 0$. Let
  \begin{equation*}
    n(\alpha) = \left\{ 
    \begin{array}{ll}
      M, & \text{ if } \alpha = \alpha_h \text{ for some } h \geq 0,\\
      -\infty & \text{ otherwise }.
    \end{array} 
    \right.
  \end{equation*}
  The net $(n(\alpha))_{\alpha \in I}$ defines an open lattice $\Lambda =
  \sum_\alpha \pp^{n(\alpha)} t^\alpha$. Now, suppose that $x_1, \ldots,
  x_m \in F$ satisfy that $C \subset \Lambda + \OO x_1 + \cdots + \OO
  x_m$. Let us write, for $1 \leq l \leq m$, $x_l = \sum_\alpha
  x_l(\alpha) t^\alpha$.

  Since $x_l(\alpha) \to 0$ for $\alpha \in I(j)$ as $j \to -\infty$, we
  have that there is an index $k \in \ZZ$ such that for every $j \leq
  k$, we have $v(x_l(\alpha)) > M$ for every $\alpha \in I(j)$ and
  $1 \leq l \leq m$. Fix an $h \geq 0$ such that $j_h \leq k$. Now, for
  such an $h$, we have
  \begin{equation*}
    \pi_{\alpha_h}(C) \subset \pi_{\alpha_h}(\Lambda + \OO x_1 + \cdots +
    \OO x_m),
  \end{equation*}
  which implies
  \begin{equation*}
    \pp^{k(\alpha_h)} \subset \pp^M + \pp^{v(x_1(\alpha_h))} + \cdots +
    \pp^{v(x_m(\alpha_h))} = \pp^M.
  \end{equation*}
  This last inclusion implies that $M \leq k(\alpha_h)$, a
  contradiction. Hence, we must have $k(\alpha) \to \infty$ for $\alpha
  \in I(j)$ as $j \to \infty$.
\end{proof}

\begin{df}
  We will refer to the $\OO$-submodules of $F$ of the form
  (\ref{eqn:basiccpctoidmodule}) as {\it basic compactoid submodules of
  $F$}.
\end{df}

\begin{corollary}
  The basic compactoid $\OO$-submodules of $F$ are c-compact.
\end{corollary}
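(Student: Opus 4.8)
The plan is to deduce this immediately from the two structural results already in hand, with no further induction on $n$. Recall that a basic compactoid submodule of $F$ is an $\OO$-module $B = \sum_{\alpha} \pp^{k(\alpha)} t^\alpha$ attached to a net $(k(\alpha))_{\alpha \in I} \subset \ZZ \cup \left\{ \infty \right\}$ satisfying conditions (i)--(ii) of Proposition \ref{prop:compactoidsubmodules}; these conditions visibly imply conditions (i)--(ii) of Proposition \ref{prop:boundedsubmodules}, since condition (i) is the same in both statements and condition (ii) of Proposition \ref{prop:compactoidsubmodules} contains condition (ii) of Proposition \ref{prop:boundedsubmodules} together with the extra requirement that $k(\alpha) \to \infty$. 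Hence $B$ is in particular a basic bounded submodule, so by Proposition \ref{prop:boundedsubmodsarecomplete} it is complete, and by Proposition \ref{prop:compactoidsubmodules} it is compactoid; being a subspace of the Hausdorff space $F$, it is also Hausdorff.

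It then remains to invoke the general principle from the theory of locally convex spaces over a spherically complete field that a complete compactoid $\OO$-submodule of a Hausdorff locally convex space is c-compact (see the review in \cite[\textsection 1]{camara-fa2dlfs}, as well as \cite{perez-garcia-schikof-locally-convex-spaces-nonarchimedean-valued-fields} and \cite{schneider-non-archimedean-functional-analysis}). Since $K$ is a local field it is locally compact, hence spherically complete, so the principle applies to $B$ and shows that $B$ is c-compact. If a more self-contained argument is preferred, one may instead use the description of c-compact modules as projective limits $B = \varprojlim_{\Lambda} B/(B \cap \Lambda)$ over the open lattices $\Lambda$ of $F$ furnished by Proposition \ref{prop:highertopislocconv}: compactoidity of $B$ forces each $B/(B \cap \Lambda)$ to be a bounded, complete $\OO$-module lying inside a finite-dimensional $K$-subspace of $F/\Lambda$, hence (as $K$ is locally compact) compact, in particular c-compact, and c-compactness passes to the projective limit because $B$ is complete.

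I do not expect a genuine obstacle here: all of the analytic content — that the modules of Proposition \ref{prop:compactoidsubmodules} are simultaneously compactoid and complete — was already isolated in Propositions \ref{prop:compactoidsubmodules} and \ref{prop:boundedsubmodsarecomplete}. The only point that needs care is to quote the correct form of the implication ``complete $+$ compactoid $\Rightarrow$ c-compact'', which is valid over a spherically complete base field, and to record that the local field $K$ satisfies this hypothesis; the corollary is then formal.
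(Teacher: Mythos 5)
Your argument is correct and is essentially the paper's own proof: the paper likewise combines boundedness, compactoidity and completeness of the basic compactoid submodules with the standard fact from nonarchimedean functional analysis (cited there as \cite[Prop.~12.7]{schneider-non-archimedean-functional-analysis}) that an $\OO$-submodule of a locally convex $K$-vector space is c-compact and bounded if and only if it is compactoid and complete. Your extra remarks (spherical completeness of $K$, the projective-limit sketch) are harmless but not needed beyond that citation.
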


\begin{proof}
  An $\OO$-submodule of a locally convex $K$-vector space is c-compact
  and bounded if and only if it is compactoid and complete
  \cite[Prop. 12.7]{schneider-non-archimedean-functional-analysis}. So
  the result follows from the fact that these $\OO$-submodules are
  bounded, compactoid and complete.
\end{proof}

\section{Duality}
\label{sec:duality}

Let us discuss some issues regarding the dual space of $F$. We showed in
\cite[Theorem 6.2]{camara-fa2dlfs} how in the two-dimensional case $F$ is isomorphic in the category
of locally convex $K$-vector spaces to $F'_c$, its continuous dual space
topologized using the c-topology, that is: the topology of uniform
convergence on compactoid submodules. 

The c-topology is defined on the continuous dual of any n-dimensional $F$ by the
collection of seminorms
\begin{equation*}
  |\cdot|_B : F' \to \mathbb{R},\quad l \mapsto \sup_{x \in B} |l(x)|,
\end{equation*}
for any basic compactoid submodule $B \subset F$.

Our first goal in this section is to construct an isomorphism of locally
convex vector spaces $F \cong F'_c$.

We have already come across some continuous nonzero linear forms on $F$
in the previous section, we recall that these were the projections, for
every $\alpha_0 \in I$:
\begin{equation*}
  \pi_{\alpha_0} : F \to K, \quad \sum_{\alpha \in I} x(\alpha) t^\alpha
  \mapsto x(\alpha_0).
\end{equation*}

In particular, denote by $\pi_0$ the continuous linear form on $F$
constructed as in the previous example for $\alpha_0 = \left( 0, \ldots,0
\right) \in I$.

We relate $F$ and its continuous dual space. Define
\begin{equation}
  \gamma: F \to F', \quad x \mapsto \pi_x,
  \label{eqn:definegamma}
\end{equation}
with
\begin{equation*}
  \pi_x: F \to K,\quad y \mapsto \pi_0(xy).
\end{equation*}

\begin{lem}
  If $x = \sum_{\alpha \in I} x(\alpha) t^\alpha$ and $y = \sum_{\alpha
  \in I} y(\alpha) t^\alpha$ are elements in $F$, we have that
  \begin{equation*}
    \pi_x(y) = \sum_{\alpha \in I} x(-\alpha)y(\alpha) = \sum_{i_{n-1} \in
    \ZZ}\cdots\sum_{i_1 \in \ZZ} x_{(-i_1,\ldots, -i_{n-1})}
    y_{(i_1, \ldots, i_{n-1})}.
  \end{equation*}
\end{lem}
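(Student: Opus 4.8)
The plan is to compute $\pi_x(y) = \pi_0(xy)$ directly by expanding the product $xy$ as a power series and extracting the coefficient at the multi-index $0 = (0,\ldots,0)$. First I would write $x = \sum_{\alpha} x(\alpha) t^\alpha$ and $y = \sum_{\alpha} y(\alpha) t^\alpha$ and recall from Remark \ref{rmk:powerseriesconverge} that both power series converge in the higher topology; moreover, multiplication by the fixed element $x$ is continuous on $F$ (as recalled in \textsection \ref{sec:firstproperties}), so $xy = \sum_{\alpha} y(\alpha)\, (x t^\alpha)$, where the series converges to $xy$. Then I would apply the continuous linear form $\pi_0$ term by term, obtaining $\pi_0(xy) = \sum_{\alpha} y(\alpha)\, \pi_0(x t^\alpha)$. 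Since $x t^\alpha = \sum_{\gamma} x(\gamma) t^{\gamma + \alpha}$, the coefficient of $t^0$ in $x t^\alpha$ is $x(-\alpha)$, that is $\pi_0(x t^\alpha) = x(-\alpha)$. Substituting gives $\pi_x(y) = \sum_{\alpha \in I} x(-\alpha) y(\alpha)$, which is the claimed formula; rewriting $\alpha = (i_1,\ldots,i_{n-1})$ and $-\alpha = (-i_1,\ldots,-i_{n-1})$ yields the expanded iterated-sum form.

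The one genuine point requiring care — and the main obstacle — is justifying that the formal manipulations above are legitimate, i.e. that the product $xy$ really is given by the naive Cauchy-product power series and that its $0$-coefficient is the (convergent) sum $\sum_\alpha x(-\alpha) y(\alpha)$. This is exactly the content of the fact that $F$ is a ring of formal power series in $t_1,\ldots,t_{n-1}$ with the stated support conditions (the $\curlyb{\cdot}$ and $\roundb{\cdot}$ conventions from \textsection \ref{sec:categoryofhdlfs}): for each fixed $\alpha$, the set of $\gamma$ with $x(\gamma)\neq 0$, $y(\gamma')\neq 0$ and $\gamma+\gamma' = \alpha$ is finite, so the Cauchy coefficient is a finite sum, and in particular the coefficient of $t^0$ is the finite (hence trivially convergent) sum $\sum_{\gamma} x(-\gamma) y(\gamma)$ over the finitely many relevant $\gamma$. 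I would note that this is the standard multiplication rule in the structure theory of higher local fields and invoke it directly, rather than reproving it; alternatively one can argue by induction on $n$, writing $F = L\curlyb{t_{n-1}}$ or $L\roundb{t_{n-1}}$ and using that multiplication and the coefficient-extraction maps behave as expected under this decomposition, reducing to the case $n=2$ treated in \cite{camara-fa2dlfs}.

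Finally I would observe that the reindexing $\alpha \mapsto -\alpha$ is a bijection of $I = \ZZ^{n-1}$ onto itself, so $\sum_{\alpha} x(-\alpha) y(\alpha) = \sum_{\alpha} x(\alpha) y(-\alpha)$ and the expression is manifestly symmetric in $x$ and $y$ up to this involution; this symmetry is what will make the pairing $(x,y) \mapsto \pi_x(y)$ suitable for the self-duality result of Theorem \ref{thm:selfduality}, but for the present lemma it suffices to record the displayed identity.
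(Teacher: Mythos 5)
Your overall route agrees with the paper's: the paper's proof is a one-line ``unwind the notation and take the $0$-th coefficient of $xy$ in each parameter $t_l$, for $l$ descending from $n-1$ to $1$'', and your fallback suggestion (induction on $n$ via $F = L\curlyb{t_{n-1}}$ or $L\roundb{t_{n-1}}$, extracting coefficients one variable at a time and reducing to the two-dimensional case) is exactly that argument. The identity $\pi_0(x t^\alpha) = x(-\alpha)$ and the reindexing $\alpha \mapsto -\alpha$ are unproblematic.

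However, the step you yourself single out as the crux is justified by a false claim. It is not true that for fixed $\alpha$ only finitely many pairs $\gamma + \gamma' = \alpha$ with $x(\gamma) \neq 0$, $y(\gamma') \neq 0$ contribute: this holds only in the equal-characteristic directions $t_{r+1}, \ldots, t_{n-1}$, where supports are bounded below. In the mixed-characteristic variables $t_1, \ldots, t_r$ the support of an element can be all of $\ZZ$ (e.g. $\sum_{i \in \ZZ} \pi_K^{|i|} t_1^i \in K\curlyb{t_1}$), and then the $t_1^0$-coefficient of a product is a genuinely infinite sum such as $\sum_{j \in \ZZ} \pi_K^{2|j|}$. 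What makes the formula meaningful there is not finiteness but $p$-adic convergence in $K$, guaranteed by the defining conditions of $\curlyb{\cdot}$ (coefficient valuations bounded below and tending to $\infty$ as the exponent tends to $-\infty$); this is also why the lemma states the result as an iterated sum $\sum_{i_{n-1}} \cdots \sum_{i_1}$ rather than an unordered one. Note too that your first-paragraph continuity argument does not fully sidestep this: the terms $\pi_0$ of truncated products are themselves such (possibly infinite) coefficient sums. Replace the finiteness claim by the convergence argument just described, or simply run the induction on $n$ you propose as an alternative, and the proof is correct and matches the paper's.
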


\begin{proof}
  The result becomes clear once notation is unwinded and the 0-th
  coefficient of $xy$ is taken for each parameter $t_l$ separately, for
  $l$ descending from $n-1$ to $1$.
\end{proof}

\begin{lem}
  \label{lem:surjectivityofgamma}
  Let $w \in F'$. Define, for each $\alpha \in I$, $x(\alpha) =
  w(t^{-\alpha}) \in K$. Then, the expression $\sum_{\alpha \in I}
  x(\alpha) t^\alpha$ defines an element in $F$.
\end{lem}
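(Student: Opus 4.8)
The plan is to verify that the candidate coefficient net $(x(\alpha))_{\alpha\in I}$, with $x(\alpha)=w(t^{-\alpha})$, satisfies the two conditions characterizing elements of $F$, namely that $x$ has a well-formed power-series expansion: for $l\in\{r+1,\dots,n-1\}$ the support in the $i_l$-direction is bounded below (after fixing higher indices), and in the $\curlyb{\cdot}$-directions $l\in\{1,\dots,r\}$ the valuations $v(x(\alpha))$ are bounded below in each fixed higher slice and tend to $+\infty$ as the relevant index tends to $-\infty$. Equivalently, by Proposition \ref{prop:compactoidsubmodules}, it suffices to show that the net $(v(x(\alpha)))_{\alpha\in I}$ defines (a lower bound for the exponents of) a basic compactoid submodule, i.e. that there is a basic compactoid $B$ with $t^{-\alpha}\in$ a fixed bounded set forcing $w(t^{-\alpha})$ to grow appropriately.

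The key mechanism is continuity of $w$ together with the explicit description of open lattices from Proposition \ref{prop:highertopislocconv}. First I would observe that since $w\in F'$, there is an admissible seminorm $\|\cdot\|$ (attached to a net $(n(\alpha))_{\alpha\in I}\subset\ZZ\cup\{-\infty\}$ satisfying the conditions of Proposition \ref{prop:highertopislocconv}) such that $|w(y)|\le \|y\|$ for all $y\in F$. Applying this to $y=t^{-\alpha}$ gives $|x(\alpha)|=|w(t^{-\alpha})|\le \|t^{-\alpha}\|=q^{n(-\alpha)}$, hence $v(x(\alpha))\ge -n(-\alpha)$ for every $\alpha\in I$. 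So the task reduces to checking that the net $\bigl(-n(-\alpha)\bigr)_{\alpha\in I}$ satisfies conditions (i) and (ii) of Proposition \ref{prop:compactoidsubmodules} (with $\infty$ allowed when $n(-\alpha)=-\infty$); then $\sum_\alpha\pp^{-n(-\alpha)}t^\alpha$ is a basic compactoid submodule containing $\sum_\alpha\pp^{v(x(\alpha))}t^\alpha$, and in particular $x=\sum_\alpha x(\alpha)t^\alpha$ is a legitimate element of $F$.

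The translation between the two sets of conditions is a bookkeeping exercise that uses the sign flip $\alpha\mapsto -\alpha$: condition (i) in Proposition \ref{prop:highertopislocconv} (for $l\in\{r+1,\dots,n-1\}$: $n(\alpha)=-\infty$ for $i_l\gg 0$) becomes, after negating indices and exponents, exactly condition (i) in Proposition \ref{prop:compactoidsubmodules} ($-n(-\alpha)=\infty$, i.e. $\pp^{-n(-\alpha)}=0$, for $i_l\ll 0$); and the two-part condition (ii) in Proposition \ref{prop:highertopislocconv} (for $l\in\{1,\dots,r\}$: $n(\alpha)\le c$ on each slice, and $n(\alpha)\to-\infty$ as $i_l\to+\infty$) translates into condition (ii) of Proposition \ref{prop:compactoidsubmodules} ($-n(-\alpha)\ge -c$ on each slice, and $-n(-\alpha)\to\infty$ as $i_l\to-\infty$). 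For the base case $n=2$ one simply cites \cite[\textsection 6]{camara-fa2dlfs}; for $n>2$ the induction is implicit in the fact that the conditions in both propositions have the same recursive shape in $l$, so no genuinely new argument is needed beyond the index relabelling.

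The only real subtlety, and the point I would state carefully rather than gloss over, is the direction of the inequalities and of the limits under $\alpha\mapsto -\alpha$: one must be sure that "bounded above by $c$" for $n$ turns into "bounded below" for the dual exponents and that "$\to-\infty$ as the index $\to+\infty$" turns into "$\to+\infty$ as the index $\to-\infty$", matching precisely the compactoid conditions; getting a sign wrong here would pair $F'$ with the bounded (rather than compactoid) bornology, which is false in dimension $\ge 2$. Once the sign bookkeeping is pinned down, the argument closes immediately: $x$ defines an element of $F$ because its coefficient valuations dominate those of a basic compactoid submodule, which a fortiori is a basic bounded submodule, so the defining power series converges (Remark \ref{rmk:powerseriesconverge}) and lies in $F$.
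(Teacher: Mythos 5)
Your argument is correct and is essentially the paper's own route: the proof in the paper simply invokes induction together with the two-dimensional case \cite[Lemma 6.3]{camara-fa2dlfs}, whose mechanism is exactly what you spell out --- continuity of $w$ gives domination by an admissible seminorm attached to a net $(n(\alpha))_{\alpha\in I}$, whence $v(w(t^{-\alpha}))\geq -n(-\alpha)$, and the sign-flipped net satisfies the conditions characterising elements of $F$, which is the same exchange of the conditions of Proposition \ref{prop:highertopislocconv} and Proposition \ref{prop:compactoidsubmodules} that the paper records after Proposition \ref{prop:pseudopolars}. The only cosmetic point is that your closing appeal to Remark \ref{rmk:powerseriesconverge} is not what finishes the job (that remark concerns partial sums of an element already known to lie in $F$); what does the work is the routine recursive verification, which you in fact describe, that a formal series whose coefficient valuations satisfy conditions (i) and (ii) of Proposition \ref{prop:compactoidsubmodules} is a well-formed element of $F$.
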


\begin{proof}
  The result may be shown by induction; the argument is the same as in
  \cite[Lemma 6.3]{camara-fa2dlfs}.
\end{proof}

\begin{thm}
  \label{thm:selfduality}
  The map $\gamma: F \to F'_c$ is an isomorphism of locally convex vector
  spaces.
\end{thm}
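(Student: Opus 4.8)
The strategy is to show that $\gamma$ is (a) a well-defined injective linear map, (b) surjective, and (c) a homeomorphism onto $F'_c$, i.e.\ both $\gamma$ and $\gamma^{-1}$ are continuous. Linearity is immediate from the formula $\pi_x(y) = \sum_\alpha x(-\alpha)y(\alpha)$ in the first Lemma, which also shows $\gamma$ is injective: if $\pi_x = 0$ then pairing against $t^{-\alpha}$ gives $x(\alpha) = 0$ for all $\alpha \in I$, hence $x = 0$. Well-definedness --- that each $\pi_x$ is actually continuous --- should follow from Proposition \ref{prop:seminorms} together with Remark \ref{rmk:powerseriesconverge}: the pairing is a convergent sum, and one estimates $|\pi_x(y)|$ by an admissible seminorm of $y$ whose net is read off from the (bounded-below / eventually-$-\infty$) shape of the coefficient net of $x$ given by Proposition \ref{prop:boundedsubmodules}.

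For surjectivity, given $w \in F'$, Lemma \ref{lem:surjectivityofgamma} produces $x = \sum_\alpha w(t^{-\alpha}) t^\alpha \in F$, and by construction $\pi_x$ agrees with $w$ on every monomial $t^\alpha$. Since the monomials span a dense subspace of $F$ (Remark \ref{rmk:powerseriesconverge}: every element is the limit of its partial sums) and both $\pi_x$ and $w$ are continuous, they coincide on all of $F$; thus $\gamma(x) = w$ and $\gamma$ is a linear bijection.

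It remains to identify the topology, and this is where the two halves of the earlier work are put to use. For continuity of $\gamma: F \to F'_c$: a basic c-topology seminorm $|\cdot|_B$ is attached to a basic compactoid submodule $B = \sum_\alpha \pp^{k(\alpha)}t^\alpha$ with the net $(k(\alpha))$ as in Proposition \ref{prop:compactoidsubmodules}. One computes $|\gamma(x)|_B = \sup_{y \in B}|\pi_x(y)| = \sup_\alpha |x(-\alpha)|\,q^{-k(\alpha)}$; the claim is that $x \mapsto \sup_\alpha |x(-\alpha)| q^{-k(\alpha)}$ is precisely the admissible seminorm on $F$ attached to the net $n(-\alpha) = -k(\alpha)$, and the point is to check that the conditions on $(k(\alpha))$ for compactoidness translate \emph{exactly} into the conditions on $(n(\alpha))$ for admissibility in Proposition \ref{prop:highertopislocconv} (the eventual-$\infty$ condition in negative degrees becomes the eventual-$-\infty$ condition in positive degrees, and the $k(\alpha)\to\infty$ condition becomes $n(\alpha)\to-\infty$). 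Conversely, for continuity of $\gamma^{-1}$, one runs the same dictionary backwards: every admissible seminorm on $F$ arises as $|\cdot|_B$ for a suitable basic compactoid $B$ obtained by negating and reindexing the defining net. Because the correspondence $(k(\alpha))_{\alpha} \leftrightarrow (-k(-\alpha))_\alpha$ is an involution carrying the compactoid-net conditions to the admissible-net conditions and back, the seminorm systems defining $F$ and $F'_c$ match up bijectively, so $\gamma$ is a homeomorphism. As throughout the paper, the induction on $n$ reduces to the case $n=2$ of \cite[Theorem 6.2]{camara-fa2dlfs}, with the inductive step applying the same argument to $F = L\curlyb{t_{n-1}}$ or $F = L\roundb{t_{n-1}}$.

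The main obstacle I anticipate is the bookkeeping in the last step: verifying that the combinatorial conditions defining admissible nets (Proposition \ref{prop:highertopislocconv}) are carried \emph{precisely} onto the conditions defining compactoid nets (Proposition \ref{prop:compactoidsubmodules}) under $\alpha \mapsto -\alpha$ and $k \mapsto -k$, uniformly over the mixed-characteristic ($\curlyb{\cdot}$) and equal-characteristic ($\roundb{\cdot}$) blocks. Getting the quantifier order right in conditions (i) and (ii) --- and checking that "eventually $\infty$ as $j \to -\infty$" dualizes to "eventually $-\infty$ as $k \to +\infty$" in the correct variable --- is the delicate part; once that dictionary is established, the isomorphism is formal.
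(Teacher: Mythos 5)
Your proposal is correct and follows essentially the same route as the paper: injectivity and linearity of $\gamma$, surjectivity via Lemma \ref{lem:surjectivityofgamma} together with the convergence of partial sums (Remark \ref{rmk:powerseriesconverge}), and bicontinuity by matching the c-topology seminorm $|\cdot|_B$ of a basic compactoid $B=\sum_\alpha \pp^{k(\alpha)}t^\alpha$ with the admissible seminorm attached to the net $(-k(-\alpha))_{\alpha\in I}$. The "dictionary" you flag as the delicate step is exactly the involution the paper relies on (and spells out again after Proposition \ref{prop:pseudopolars}), so no further changes are needed.
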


\begin{proof}
  The map $\gamma$ is linear and injective. Surjectivity follows from
  Lemma \ref{lem:surjectivityofgamma}; as if $w \in F'$, we apply the
  lemma to obtain $x = \sum_{\alpha \in I} x(\alpha) t^\alpha \in F$.
  Then, for any $y = \sum_{\alpha\in I} y(\alpha) t^\alpha \in F$, we have
  \begin{equation*}
    w(y) = w \left( \sum_{\alpha} y(\alpha) t^\alpha \right) =
    \sum_\alpha y(\alpha) w(t^\alpha) = \sum_\alpha y(\alpha) x(-\alpha) =
    \pi_x(y).
  \end{equation*}
  The second equality follows from Remark \ref{rmk:powerseriesconverge}.

  In order to show bicontinuity of $\gamma$, the argument is very similar
  to the one given in the proof of
  \cite[Theorem 6.2]{camara-fa2dlfs}; given a basic compactoid
  $\OO$-submodule $B = \sum_{\alpha \in I} \pp^{k(\alpha)} t^\alpha$ of
  $F$ as in Proposition \ref{prop:compactoidsubmodules}, the net
  $( -k(-\alpha) )_{\alpha \in I}$ defines an
  admissible seminorm $\| \cdot \|$ on $F$. We have that, for $x \in F$,
  \begin{equation*}
    \| x \| \leq q^{n} \text{ if and only if } |\pi_x|_B \leq q^n,
  \end{equation*}
  which concludes the proof.
\end{proof}

If $A \subset F$ is an $\OO$-submodule, we denote $A^\gamma =
\gamma^{-1}(A^p) \subset F$, with $A^p \subset F'$ being the pseudo-polar of $A$.

\begin{prop}
  \label{prop:pseudopolars}
  Let $A = \sum_{\alpha \in I} \pp^{k(\alpha)} t^\alpha \subset F$ be an
  $\OO$-submodule, with $k(\alpha) \in \ZZ \cup \left\{ \pm \infty
  \right\}$. We have that
  \begin{equation*}
    A^\gamma = \sum_{\alpha \in I} \pp^{1-k(-\alpha)} t^{\alpha}.
  \end{equation*}
\end{prop}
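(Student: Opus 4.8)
The plan is to unwind both sides and reduce the statement to the coefficientwise description of the pairing. Recall that $\gamma(x) = \pi_x$ where, by the formula established just above, $\pi_x(y) = \sum_{\alpha \in I} x(-\alpha) y(\alpha)$, a sum which is nothing but the constant coefficient $\pi_0(xy)$ of the product $xy \in F$. By definition $A^\gamma = \gamma^{-1}(A^p)$ with $A^p = \{ l \in F' : |l(a)| < 1 \text{ for all } a \in A \}$, and since $|K^\times| = q^{\ZZ}$ the condition $|l(a)| < 1$ is equivalent to $l(a) \in \pp$. Because $\gamma : F \to F'$ is a bijection (Theorem \ref{thm:selfduality}), $A^\gamma$ is automatically an $\OO$-submodule of $F$, so it will be enough to determine for which $x = \sum_\alpha x(\alpha) t^\alpha \in F$ one has $\pi_x(a) \in \pp$ for every $a = \sum_\alpha a(\alpha) t^\alpha \in A$, that is, for every choice of coefficients $a(\alpha) \in \pp^{k(\alpha)}$.

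For the inclusion $\supseteq$ I would assume $x(\alpha) \in \pp^{1-k(-\alpha)}$ for all $\alpha$ --- equivalently $x(-\alpha) \in \pp^{1-k(\alpha)}$ for all $\alpha$ --- and observe that, for $a \in A$, each term of $\pi_x(a) = \sum_\alpha x(-\alpha) a(\alpha)$ has valuation at least $(1-k(\alpha)) + k(\alpha) = 1$, the conventions $\pp^{-\infty} = K$ and $\pp^{\infty} = 0$ taking care of the cases $k(\alpha) = \pm\infty$ (one of the two factors then vanishes). Since $\pi_x(a)$ is the constant coefficient of $xa \in F$ and hence a limit of partial sums of such terms, and $\pp$ is a closed subgroup of $K$, this gives $\pi_x(a) \in \pp$, so $x \in A^\gamma$.

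For the reverse inclusion $\subseteq$ I would fix $x \in A^\gamma$ and $\beta \in I$ and aim at $x(\beta) \in \pp^{1-k(-\beta)}$. The idea is to test against well-chosen monomials: for any $c \in \pp^{k(-\beta)}$ the element $c\,t^{-\beta}$ lies in $A$, and a direct computation gives $\pi_x(c\,t^{-\beta}) = x(\beta)\, c$, whence $v(x(\beta)) + v(c) \geq 1$. When $k(-\beta) = \infty$ this is vacuous while $\pp^{1-k(-\beta)} = K$; when $k(-\beta) = -\infty$ one may let $v(c) \to -\infty$, forcing $x(\beta) = 0 \in \pp^{\infty}$; and when $k(-\beta) \in \ZZ$, taking $c = \pi_K^{k(-\beta)}$ yields $v(x(\beta)) \geq 1 - k(-\beta)$. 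In every case $x(\beta) \in \pp^{1-k(-\beta)}$, so $x \in \sum_{\alpha} \pp^{1-k(-\alpha)} t^\alpha$ and the two submodules coincide.

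The one step that needs genuine care is the passage to the limit in the $\supseteq$ direction: one must be sure the iterated sum defining $\pi_x(a)$ converges in $K$ and that membership of its partial sums in $\pp$ is preserved in the limit. This is precisely what the identification $\pi_x(a) = \pi_0(xa)$ with $xa$ a genuine element of $F$, combined with the coefficientwise formula for $\pi_x$, supplies; the remainder is routine bookkeeping with the $\pp^{\pm\infty}$ conventions and with the index substitution $\alpha \leftrightarrow -\alpha$.
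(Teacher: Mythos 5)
Your argument is correct and is essentially the argument the paper has in mind: the paper simply defers to the proof of the two-dimensional case in \cite[Proposition 6.7]{camara-fa2dlfs}, which proceeds exactly as you do, unwinding $\pi_x(a)=\pi_0(xa)$ coefficientwise, using closedness of $\pp$ for the inclusion $\supseteq$, and testing against monomials $c\,t^{-\beta}\in A$ for $\subseteq$. Your handling of the $\pp^{\pm\infty}$ conventions and the substitution $\alpha\leftrightarrow-\alpha$ matches the intended multi-index generalization.
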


\begin{proof}
  The argument is the same as the one exposed in the proof of
  \cite[Proposition 6.7]{camara-fa2dlfs}.
\end{proof}

The isomorphism $\gamma$ is not unique as, for example, choosing
$\pi_\alpha$ for any $\alpha \in I$ instead of $\pi_0$ in the definition
of $\gamma$ would have given a different isomorphism. Thus, the shape of
$A^\gamma$ depends ultimately on our choice of $\gamma$.

However, there are certain facts which are general for pseudo-polars of
$\OO$-submodules in any locally convex $K$-vector space. As such, we
recall that taking the pseudo-polar exchanges open lattices and compactoid
$\OO$-submodules, and that the pseudo-bipolar of an $\OO$-submodule is equal
to its closure.

These facts are highlighted in the previous Proposition for the
$\OO$-submodules of the form $\sum_\alpha \pp^{k(\alpha)}t^\alpha$, which
are closed. The facts that for an open lattice $\Lambda$ we have that
$\Lambda^\gamma$ is compactoid and that for a basic compactoid set
$B$ we have that $B^\gamma$ is an open lattice are evident by checking
that for the nets $(n(\alpha))_{\alpha \in I}$ and
$(1-n(-\alpha))_{\alpha \in I}$, one of them satisfies conditions (i) and
(ii) in Proposition \ref{prop:highertopislocconv} if and only if the
other one satisfies conditions (i) and (ii) in Proposition
\ref{prop:compactoidsubmodules}.

%

\section{The general case}
\label{sec:generalcase}

In this section, let $K \into F$ be a general $n$-dimensional local field
over $K$. By structure
theory, there is an $r \in \left\{ 0, \ldots, n \right\}$ such that $F$
is a finite extension of
$F_0 :=
K\curlyb{t_1}\cdots\curlyb{t_r}\roundb{t_{r+1}}\cdots\roundb{t_{n-1}}$ as
explained in \textsection \ref{sec:categoryofhdlfs}.
Denote the degree of such extension by $e$.
The higher topology on $F$ may be defined as the product topology on $F
\cong
(F_0)^{e}$
\cite[Remark after 1.3.2]{ihlf}.

Since the product topology on a product of locally convex vector spaces
is again locally convex, a higher topology on $F$ is locally convex.
Open lattices (resp. continuous seminorms) on $F$ may be described using
Proposition \ref{prop:highertopislocconv} (resp. Proposition
\ref{prop:seminorms}) and \cite[Proposition 1.1]{camara-fa2dlfs}.

Finally, from Theorem \ref{thm:selfduality} we recover the existence of
an isomorphism $F \cong F'_c$ via the chain
\begin{equation*}
  F' \cong (F_0^e)' \cong (F'_0)^e \cong ((F_0)'_c)^e \cong F_0^e \cong
  F.
\end{equation*}
Explicit nonzero continuous linear forms on
$F$ may be obtained by composing the forms $\pi_\alpha: F_0 \to K$ for
$\alpha \in I$ with $\Tr_{F\vert F_0}$.

\section{Other types of higher local fields}

Let us center our attention, for the sake of completeness, on the higher
local fields which we have not treated in the previous.

First, suppose that $\car F = p$. In such case, as explained in
\textsection \ref{sec:categoryofhdlfs}, there is a finite field
$\mathbb{F}_q$ and elements $t_1, \ldots, t_n \in F$ such that
\begin{equation*}
  F \cong \mathbb{F}_q\roundb{t_1}\cdots\roundb{t_n}.
\end{equation*}
The field $\mathbb{F}_q\roundb{t_1}$ is a local field and the results in
this work may be applied to $F$ if we let $K =
\mathbb{F}_q\roundb{t_1}$. However, after \cite[\textsection 9]{camara-fa2dlfs}, we are only stating that the higher topology
on $F$ is a linear topology when we regard $F$ as a vector space over
$\mathbb{F}_q$. The choice between linear-topological structures over
$\mathbb{F}_q$ or locally convex structures over
$\mathbb{F}_q\roundb{t_1}$ is merely a matter of language in our case.

Now let $\mathbb{K} = \mathbb{R}$ or $\mathbb{C}$ and denote the usual
absolute value by $| \cdot |$. The theory of higher local fields is also
developed by looking at complete discrete valuation fields $F$ that have
an $n$-dimensional structure on them and such that $F_1 =
\mathbb{K}$. For these, there are $t_1, \ldots, t_{n-1} \in F$ for which
\begin{equation*}
  F \cong \mathbb{K}\roundb{t_1}\cdots\roundb{t_{n-1}}.
\end{equation*}
As hinted at in \cite[\textsection 8]{camara-fa2dlfs}, we can apply
the archimedean theory of locally convex spaces to study these fields.

The open disks
\begin{equation*}
  D_\rho = \left\{ a \in \mathbb{K};\; |a| < \rho \right\},\quad \rho \in
  \mathbb{Q}_{>0} \cup \left\{ \infty \right\}
\end{equation*}
supply a basis of convex sets for the euclidean topology on
$\mathbb{K}$.

The higher topology on $F$ is constructed by iterating the construction
in \cite[\textsection 8]{camara-fa2dlfs}.

\begin{prop}
  Let $I = \ZZ^{n-1}$ and $( \rho(\alpha) )_{\alpha \in I}
  \subset \mathbb{Q}_{>0} \cup \left\{ \infty \right\}$ be a net restricted to
  the condition:

  For any $l \in \left\{ 1, \ldots, n-1 \right\}$ and fixed indices
  $i_{l+1}, \ldots, i_{n-1} \in \ZZ$ there is a $k_0 \in \ZZ$ such that
  for every $k \geq k_0$ we have $\rho(\alpha) = \infty$ for all
  $\alpha \in I(k, i_{l+1}, \ldots, i_{n-1})$.

  The higher topology on $F$ is locally convex and it is defined by the
  seminorms of the form
  \begin{equation}
    \| \cdot \|: F \to \mathbb{R}, \sum_{\alpha \in I} x(\alpha) t^\alpha
    \mapsto \sup_{\alpha \in I} \frac{|x(\alpha)|}{\rho(\alpha)},
    \label{eqn:archiseminorm}
  \end{equation}
  with the convention that $a/\infty = 0$ for any $a \in
  \mathbb{R}_{\geq 0}$.
\end{prop}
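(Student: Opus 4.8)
The plan is to mirror, in the archimedean setting, the inductive strategy used for Propositions \ref{prop:highertopislocconv} and \ref{prop:seminorms}, replacing open lattices by absolutely convex open sets and gauge seminorms by the usual Minkowski functionals. First I would observe that the archimedean analogue of the ``equal characteristic'' construction of \cite[\textsection 8]{camara-fa2dlfs} is precisely the one used here: starting from $\mathbb{K}$ with its euclidean topology, whose convex neighbourhood basis is given by the disks $D_\rho$ for $\rho \in \mathbb{Q}_{>0} \cup \{\infty\}$, one iterates the rule that a neighbourhood basis of zero for $E\roundb{t}$ consists of sets $\sum_{i \in \ZZ} U_i t^i = \{\sum_{i \gg -\infty} x_i t^i;\ x_i \in U_i\}$ where $\{U_i\}_{i \in \ZZ}$ is a sequence of neighbourhoods of zero of $E$ with $U_i = E$ for $i \geq i_0$. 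Since a product and an iterated application of such constructions of absolutely convex neighbourhoods remains absolutely convex, the higher topology on $F$ is locally convex.

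Next I would carry out the induction on $n$ to identify the absolutely convex open sets explicitly. For $n = 2$ this is \cite[\textsection 8]{camara-fa2dlfs}. For $n > 2$ write $F = L\roundb{t_{n-1}}$ with $L = \mathbb{K}\roundb{t_1}\cdots\roundb{t_{n-2}}$, and suppose by induction that the absolutely convex open neighbourhoods of zero of $L$ are the sets $\sum_{\beta \in J} D_{\rho(\beta)} t^\beta$ for nets $(\rho(\beta))_{\beta \in J} \subset \mathbb{Q}_{>0} \cup \{\infty\}$ satisfying the stated condition for $l \in \{1, \ldots, n-2\}$. Applying the $E\roundb{t}$-construction with $E = L$, a neighbourhood basis of zero of $F$ is formed by the sets $\Lambda = \sum_{i \in \ZZ} M_i t_{n-1}^i$ where each $M_i = \sum_{\beta \in J} D_{\rho(\beta,i)} t^\beta$ is such an open set of $L$ and $M_i = L$ for $i \geq i_0$; the condition $M_i = L$ translates, via the induction hypothesis, to $\rho(\beta,i) = \infty$ for all $\beta \in J$ when $i \geq i_0$. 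Setting $\alpha = (\beta, i)$ this gives exactly the net condition in the statement for the extra index $l = n-1$, and each such $\Lambda$ is absolutely convex because the $M_i$ are. Conversely, any basic neighbourhood of this form arises in this way.

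Finally I would translate the neighbourhood description into the seminorm description. Given a net $(\rho(\alpha))_{\alpha \in I}$ as above, the Minkowski functional of $\Lambda = \sum_\alpha D_{\rho(\alpha)} t^\alpha$ is $\|x\| = \inf\{\lambda > 0;\ x \in \lambda \Lambda\}$, and $x = \sum_\alpha x(\alpha) t^\alpha \in \lambda\Lambda$ exactly when $|x(\alpha)| < \lambda \rho(\alpha)$ (equivalently $|x(\alpha)|/\rho(\alpha) < \lambda$, with the convention $a/\infty = 0$) for every $\alpha \in I$; taking the infimum over such $\lambda$ yields $\|x\| = \sup_\alpha |x(\alpha)|/\rho(\alpha)$, which is (\ref{eqn:archiseminorm}). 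Since these $\Lambda$ form a neighbourhood basis of zero, the associated seminorms define the topology. The main obstacle is a bookkeeping one rather than a conceptual one: one must check that, as in the non-archimedean case, passing freely between ``$\rho(\alpha) \in \mathbb{Q}_{>0}$'' and real radii does not enlarge the class of topologies obtained, i.e.\ that the rational-radius disks already give a neighbourhood basis at each stage of the iteration — this follows because every real radius can be approximated from below by a rational one and the construction is monotone in the $U_i$ — and that the iterated ``$i \geq i_0$'' stabilisation conditions assemble correctly into the single displayed net condition; both points are routine adaptations of the arguments in \cite[\textsection 8]{camara-fa2dlfs} and of Proposition \ref{prop:highertopislocconv} above.
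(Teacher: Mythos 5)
Your proposal is correct and follows essentially the same route as the paper, which simply invokes \cite[Proposition 8.3]{camara-fa2dlfs} together with the inductive argument of Proposition \ref{prop:highertopislocconv} and the gauge computation of Proposition \ref{prop:seminorms}. The only wording to adjust is in your induction hypothesis: the sets $\sum_{\beta \in J} D_{\rho(\beta)} t^\beta$ form a \emph{basis} of absolutely convex neighbourhoods of zero of $L$ (as you in fact use in the end), rather than exhausting all such neighbourhoods.
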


\begin{proof}
  The result follows from \cite[Proposition 8.3]{camara-fa2dlfs}
  and straightforward adaptation of the arguments used in Proposition
  \ref{prop:highertopislocconv} and Corollary \ref{prop:seminorms}.
\end{proof}

\label{sec:positivecharcase}
\label{sec:archimedeancase}

\section{Future work}
\label{sec:future}

Let us start this discussion by saying that
\cite[\textsection 10]{camara-fa2dlfs} contains a list of topics
worth studying, and that many of them are closely related to the topics
dealt with in this note.

Among the directions outlined there, there is one which particularly has
a direct impact on the study of functional analytic properties of higher local
fields of arbitrary dimension. 

Structure and topology on higher local fields may be studied successfully
as an iteration of applications of inverse limits
in the form of completions and direct limits in the form of
localizations. In order to describe functional analytic structures that
hold in any dimension and regardless of the characteristic type of $F$,
it seems that two initial ingredients are necessary: a theory of locally
convex $\OO$-modules and a study of which functional analytic properties of
these modules are preserved after taking direct and inverse limits.

On a different direction, the dependence of higher topologies on choices
of coefficient fields as soon as $\car \overline{F} = 0$ is a well-known
handicap of the theory. For this reason, showing a class of subsets of
$F$ with an interesting topological or functional analytic property and
which would remain stable under change of coefficient field would be
extremely important.

Similarly, we have not dealt with the different rings of integers
of a higher local field in this note, on purpose: although they are
$\OO$-submodules which are very relevant for arithmetic purposes, as 
highlighted already by comparing $K\roundb{t}$ and $K\curlyb{t}$ in
\cite{camara-fa2dlfs}, the functional analytic properties of such rings
change drastically according to the characteristic of the residue field.
It would also be interesting to establish whether there is a relevant 
topological or analytic property which highlights the relevance of these
arithmetically interesting $\OO$-submodules.

\def\cprime{$'$} \def\cprime{$'$}


\vskip 1cm
\begin{flushright}
	{\bf Alberto C\'amara}\\
	\footnotesize{
	{\it School of Mathematical Sciences\\
	University of Nottingham\\
	University Park\\
	Nottingham\\
	NG7 2RD\\
	United Kingdom\\}}
	\url{http://www.maths.nottingham.ac.uk/personal/pmxac}\\
	\href{mailto:pmxac@nottingham.ac.uk}{{\tt pmxac@nottingham.ac.uk}}
\end{flushright}

\end{document}